\newtheorem{definition}{\bf{Definition}}[section]
\newtheorem{lemma}{\bf{Lemma}}[section]
\newtheorem{prop}{\bf{Proposition}}[section]
\newtheorem{theorem}{\bf{Theorem}}[section]
\newtheorem{remark}{\sc{Remark}}[section]
\begin{document}
\title{Dynamic bilateral boundary conditions on interfaces}

\author{Luisa Consiglieri
  \footnote{Independent Research Professor, Portugal. 
\href{http://sites.google.com/site/luisaconsiglieri}{http://sites.google.com/site/luisaconsiglieri}
}}
\maketitle
\begin{abstract}
Two boundary value problems for an elliptic equation  in divergence form
with bounded discontinuous coefficient are studied in a bidomain.
On the interface, generalized dynamic boundary conditions such as of the Wentzell-type and
Signorini-type transmission are considered in a subdifferential form.
Several non-constant coefficients and nonlinearities are the main objective of the present work. 
Generalized solutions are built via time discretization.
\end{abstract}

{\bf Keywords.} Wentzell transmission, Signorini transmission, subdifferential, Rothe method

\bf 2000 MSC \rm{35J87, 49M25, 78A70}%

\section{Introduction}

In the description of real life phenomena,
challenges in science and technology such as diffusion problems with transmission conditions
are being addressed  (cf. for instance \cite{colli} and the references therein).
We refer to \cite{enfr,favi} a general framework which allows  to prove, in
a unified and systematic way, the analyticity of semigroups generated by
operators with generalized Wentzell boundary conditions on function spaces
with bounded trace operators.
The thin obstacle problem (also called the Signorini problem)
  models threshold phenomena
like contact problem, thermostatic device or semi-permeable membranes \cite{br}.
In \cite{amar} the study  relies on the presence of differential operators.
We point out that their method is based on a fixed point argument.
Under continuous or even constant coefficients,
the regularity  was shown for the Laplace-Wentzell problem \cite{engel}
or the thin obstacle problem \cite{chi}.
The question of dynamic boundary conditions can be found in  frictional contact problems
(see \cite{rp} and the references therein).
Their theoretical and numerical achievements are based on the time discretization method being closely related to ours.

With the aim of forcing to make realistic assumptions and then deal with the mathematical consequences,
we prove the well-posedness of boundary-value problems
subject to dynamic  non-linear and friction-type boundary conditions.
The present work extends the known results
of Laplacian operator to a general elliptic operator in divergence form
with bounded measurable coefficient in the context of diffusion processes.
The motivation comes essentially from the models for the electrical conduction in biological tissues \cite{amar,choilui,cd}.
The construction of generalized solutions is shown via time discretization, following 
the Rothe method \cite{kacur,karel,rek}.

Let $\Omega_1$ and $\Omega_2$ be two disjoint
bounded domains of  $\mathbb R^n(n\geq 2)$ such that
$\bar\Omega=\bar\Omega_1\cup \bar\Omega_2$ is connected with Lipschitz boundary.
Let $\Gamma=\partial\Omega_1\cap\Omega \subset\partial
\Omega_2$ denote a (n-1)-dimensional interface that can include the following
descriptions.
\begin{enumerate}
\item If $\partial\Omega_1 \subset\Omega$ then $\Gamma$ is a closed curve (n=2) or surface
$(n\geq 3)$.
Currently, $\Omega_1$    and $\Omega_2$ are called the inner and the outer domains of  $\Omega$, respectively.
\item If  $\Gamma_1:=\partial\Omega_1\setminus\bar\Gamma=\mbox{int}
(\partial\Omega_1 \cap\partial\Omega)\not=\emptyset$ then
\begin{itemize}
\item if $n=2$, $\Gamma$ is relatively open
(see Fig. \ref{fig1} (a)).

\item
If $n=3,$ $\Omega_1$ stands for a cylindrical-type domain such  that
$\Gamma_1$ represents its top and/or bottom
(see Fig. \ref{fig1} (b)).
\end{itemize}
\item 
The case of $\partial\Omega_1 \cap\partial\Omega\not=\emptyset$ with 
meas$(\partial\Omega_1 \cap\partial\Omega)=0$ 
can be clearly included whenever  $\partial\Omega_2$ is  Lipschitz continuous
(see Fig. \ref{fig1} (c)).
\end{enumerate}
In conclusion, we assume that $\partial\Omega_i$ $(i=1,2)$ are Lipschitz continuous.
The domains  have neither cuts (cracks) nor cusps, and situations as in Fig. \ref{fig1} (d)
are excluded.
\begin{figure}[h]
\includegraphics[width=\linewidth]{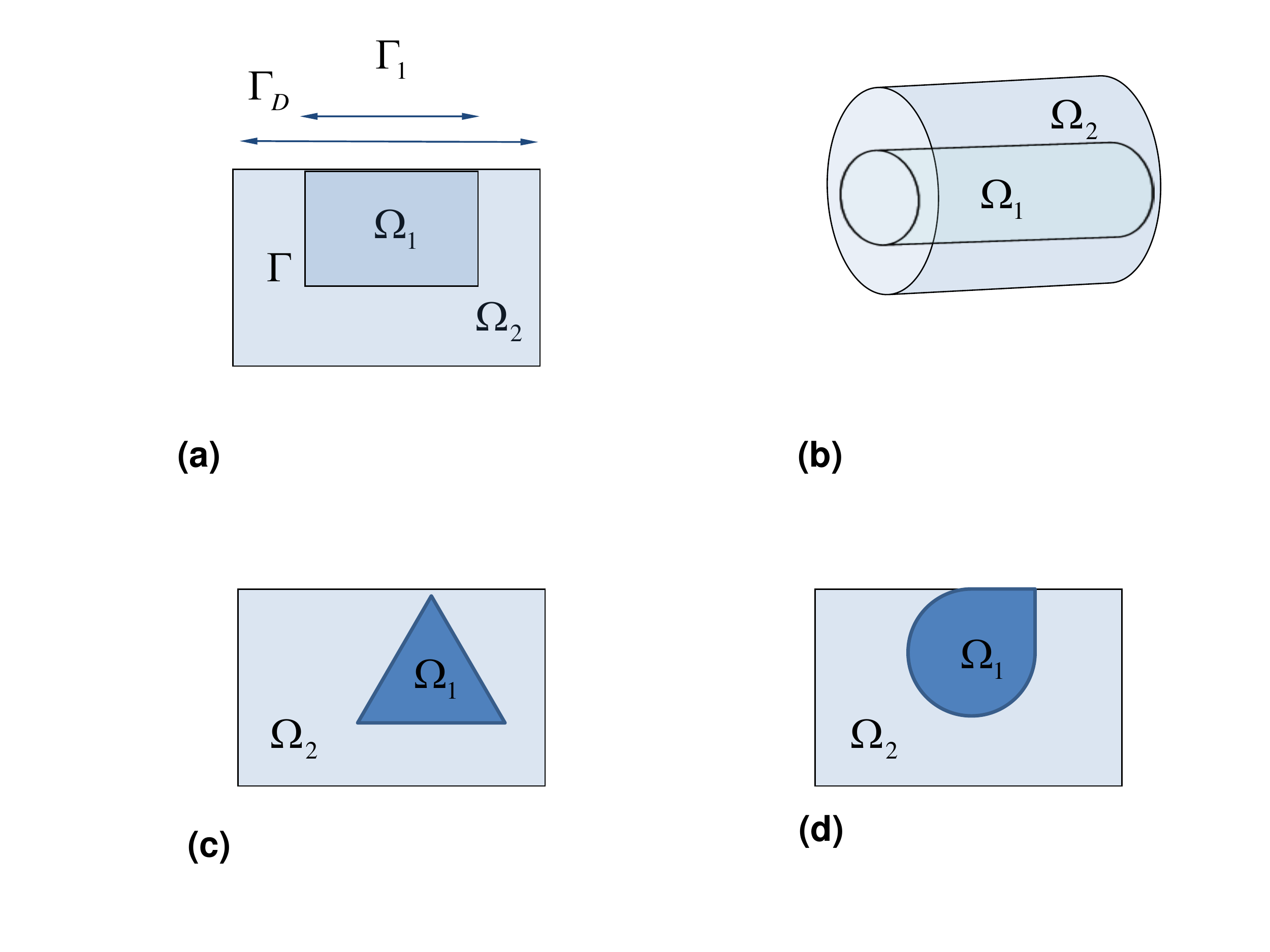}
\caption{The geometry and interface conditions:  2D (a) and 3D (b) models 
 when $\Gamma_1\not=0$; (c) other possible situation;
(d) 2D counterexample.}
\label{fig1}
\end{figure}
Define a relatively open (n-1)-dimensional set $\Gamma_2\subset\partial\Omega_2\setminus \Gamma$, 
 with meas$(\Gamma_2)>0$, 
 and $\Gamma_D=\Gamma_1\cup \Gamma_2$ where we will impose Dirichlet boundary conditions.

Let us introduce the problems under study. For $T>0$,
find $u_i:\Omega_i\times ]0,T[\rightarrow \mathbb R$ satisfying
\begin{equation}\label{delta}
-\nabla\cdot(\sigma_i\nabla u_i)=f_i\mbox{ in }\Omega_i\ (i=1,2).
\end{equation}
The first mathematical interest of this problem is due to the discontinuous coefficient
which reflects the spatial dependence of the conductivity on the electrical conduction in different materials.

On the exterior boundary
$\partial\Omega=(\partial \Omega_2\setminus\Gamma)\cup \Gamma_1, $
 we have homogeneous mixed boundary condition
\begin{equation}\label{mbc}
\nabla u_2\cdot{\bf  n}=0\mbox{ on }\partial\Omega\setminus
\Gamma_D\quad\mbox{ and $u_i=0$ on $\Gamma_D.$}
\end{equation}
On the interface $\Gamma$, we study two different types of dynamic bilateral conditions.
\begin{description}
 
\item[Wentzell-type transmission]

The generalized Wentzell transmission boundary condition is given by
\begin{eqnarray}\label{equal}
u_1=u_2 \quad \mbox{ and }\\
\ [ \sigma\nabla u\cdot{\bf n}]+\beta\Delta u_1
-\alpha\partial_tu_1\in\partial
j(u_1)\mbox{ on }\Sigma:=\Gamma\times ]0,T[,\label{wbc}
\end{eqnarray}
under the initial condition
\begin{equation}
u_1(\cdot,0)=S\mbox{ on }\Gamma\label{ci}
\end{equation}
 where $\alpha$ and $S$  are known functions and $\beta$ is a non-negative constant.
 If $\beta=0$, 
the transmission boundary condition  (\ref{equal})-(\ref{ci}) looks for the
 transmission in a thin (or lower dimensional) porous layer.
Here ${\bf n}$ is the normal unit vector to $\Gamma$ pointing into $\Omega_2$, 
$\partial$ is the subdifferential with  respect to the argument  of the function $j$,
and $[\cdot]$ denotes the jump of a quantity across the interface in direction of $\bf n$,
e.g. $[\sigma\nabla u\cdot{\bf n}]:=\sigma_2\nabla u_2\cdot{\bf
  n}-\sigma_1\nabla u_1\cdot{\bf n}$.
 
\item[Signorini-type transmission]
The transmission that characterizes the boundary thin obstacle problems such as the
semi-permeable membrane  is constituted by
the jump condition
\begin{equation}\label{bcsp}
[\sigma\nabla u\cdot{\bf n}]=g\mbox{ on }\Gamma,
  \end{equation}
and 
the Signorini-type boundary condition
\begin{equation}\label{sbc}
\sigma_2\nabla u_2\cdot{\bf n}-\alpha\partial_t[u]\in\partial
j([u])\mbox{ on }\Sigma:=\Gamma\times ]0,T[,\end{equation}
accomplished with the initial condition
\begin{equation}
[u](\cdot,0)=S\mbox{ on }\Gamma\label{cf}
\end{equation}
 where $g$, $\alpha$,  $j$ and $S$  are known functions \cite{amar}.

\end{description}

The most common application is when $\partial j$ represents the indicatrice Heaviside.
These boundary-value problems also model some of the slip
phenomena observed in contact problems \cite{dl72,rp}.
Other related problems are the unilateral problems \cite{ant}.

The paper is organized as follows. Next Section we set the functional space framework,
the assumptions on the data and main results.
 Sections \ref{s2} and \ref{s3} are devoted to the proofs of existence and uniqueness of weak solutions
 of each problem, namely provided by the Wentzell-type and
Signorini-type transmission, respectively.
These two Sections have similar structures based on the time-discretization technique
and are split into several subsections in order to clarify the exposition.
 In Section \ref{ssasymp}, we show how the unique solution to the  boundary value problem
 provided by a thin porous layer
can be obtained as the limit of perturbed problems.
Finally, some additional regularity is shown in  corresponding 
 Sections \ref{reg2} and \ref{reg1}.
 
\section{Functional space framework and main results}

The data are given under the following regularity assumptions.
Here we assume that 
\begin{equation}\label{defs}
\sigma_i\in L^\infty(\Omega_i):\ 
\exists\sigma_\#,\sigma^\#>0,\quad
\sigma_\#\leq\sigma_i(x)\leq\sigma^\#,\quad\mbox{for a.a.} x\in\Omega_i;
\end{equation}
for $i=1,2$,
\begin{equation}\label{defa}
\alpha\in L^\infty(\Gamma):\ 
\exists\alpha_\#,\alpha^\#>0,\quad
\alpha_\#\leq\alpha(x)\leq\alpha^\#,\quad\mbox{for a.a.} x\in\Gamma;
\end{equation}
and $  j:\mathbb R\rightarrow\mathbb R$ is a  convex and lower semicontinuous function such that
\begin{equation}
 j\geq 0\quad\mbox{ and }\quad j(0)= 0.\label{defj}
\end{equation}

Let us define 
\begin{eqnarray*}
H^1_{\Gamma_D}(\Omega)&=&\{v\in H^1(\Omega):\ v|_{\Gamma_D}=0\};\\
H^1_{\Gamma_i}(\Omega_i)&=&\{v\in H^1(\Omega_i):\ v|_{\Gamma_i}=0\},\qquad (i=1,2).
\end{eqnarray*}

For  a Lipschitz domain $\Omega_1$, the trace operator
$H^1_{\Gamma_1}(\Omega_1)\rightarrow H^{1/2}_{00}(\Gamma)$ has bounded linear right inverse,
that is, for every element  $S$ of the trace space 
\[H ^ {1/2}_{00}(\Gamma)=\{v\in L^2(\Gamma):\ \mbox{its zero extension belongs to }
H^{1/2}(\partial \Omega_1)\}\]
there exists $ u^0_1\in H^1_{\Gamma_1}(\Omega_1)$
such that $u^0_1=S$ on $\Gamma$ \cite{grisv}.
However, the trace mapping considered as a mapping from $H^1_{\Gamma_2}(\Omega_2)$ in
$L^2(\partial\Omega_2)$ is  surjective on $H^{1/2}_{00}(\partial\Omega_2\setminus\bar \Gamma_2)$.

Considering that the Poincar\'e inequality occurs when
$\Gamma_D\cap\partial\Omega_i\not=\emptyset$, for $i=1,2$,
then the above Hilbert spaces are  endowed with the norms
$$\|v\|_{H^1_{\Gamma_i}(\Omega_i)}=\|\nabla
  v\|_{2,\Omega_i}.$$

When $\Gamma_1=\emptyset$ and then we endow $H^1_{\Gamma_1}(\Omega_1)$  with any of the 
equivalent norms
\[\|v\|_{2,\Omega_1}+\|\nabla
  v\|_{2,\Omega_1}\sim  \|v\|_{2,\Gamma}+\|\nabla  v\|_{2,\Omega_1}.\]
Indeed, we recognize that $H^1_{\Gamma_1}(\Omega_1)\equiv H^1(\Omega_1)$ and
 $H ^ {1/2}_{00}(\Gamma)\equiv H ^ {1/2}(\partial\Omega_1)$.
 
\subsection{Wentzell-type transmission}

We can interpret the solutions  $u_i:\Omega_i\times ]0,T[\rightarrow \mathbb R$ $ (i=1,2)$
as the uniquely (almost everywhere) determined function $u:\Omega\times ]0,T[\rightarrow \mathbb R$ such that
$u|_{\Omega_1}=u_1$, $u|_{\Omega_2}= u_2$ and
$u_1=u_2$ on $\Gamma$.

Let us define $H_\beta$ as the Hilbert space
\begin{eqnarray*}
\{v\in H^1_{\Gamma_D}(\Omega):\ v_1=v|_{\Omega_1};\ v_2=v|_{\Omega_2};\
v_1=v_2\mbox{ on }\Gamma
\}&\mbox{ if }&\beta=0;\\
\{v\in H^1_{\Gamma_D}(\Omega):\ v_1=v|_{\Omega_1};\ v_2=v|_{\Omega_2};\
v_1=v_2\mbox{ on }\Gamma; \ \nabla v\in L^2(\Gamma)
\}&\mbox{ if }&\beta>0,
\end{eqnarray*}
endowed with the inner product
\[
(u,v)_\beta=\int_\Omega\nabla u\cdot\nabla vdx+\beta\int_\Gamma\nabla u\cdot\nabla v ds.
\]

\begin{definition} \label{dwt}
We say that a function ${u}\in L^2(0,T;H_\beta)$ is a weak solution to the problem
 (\ref{delta})-(\ref{ci}) 
if  $\partial_t u\in L^2(\Sigma)$ and it satisfies (\ref{ci}) and the variational formulation 
\begin{eqnarray}
\int^T_0\int_\Omega\sigma\nabla{
    u}\cdot\nabla(v-u)dxdt
+\beta\int^T_0\int_\Gamma\nabla u \cdot\nabla(v-u)dsdt+\nonumber\\
+\int^T_0\int_\Gamma\alpha\partial_tu(v-u)dsdt
+\int_0^T\int_\Gamma\{j(v)-j(u)\}dsdt\geq  \nonumber\\
\geq\int^T_0\langle f,v-u\rangle_\Omega dt,\quad\forall v\in
L^2(0,T;H_\beta),\label{pbuw}
\end{eqnarray}
with
$$\sigma=\sigma_1\chi_{\Omega_1}+\sigma_2\chi_{\Omega_{2}}\qquad\mbox{and}\qquad
f=f_1\chi_{\Omega_1}+f_2\chi_{\Omega_{2}}.$$
\end{definition}
 The symbol $\langle\cdot,\cdot\rangle_\Omega$  denotes
the duality pairing $\langle\cdot,\cdot\rangle_{(H_\beta)'\times H_\beta}$.

For $u:\Omega\times ]0,T[\rightarrow \mathbb R$ such that the homogeneous Neumann boundary
condition in (\ref{mbc}) is satisfied,
 the Green formula yields
\[
-\langle\nabla\cdot(\sigma\nabla u), v\rangle_\Omega=\int_\Omega \sigma\nabla u\cdot\nabla
vdx+\langle[\sigma\nabla u\cdot{\bf n}],v\rangle_\Gamma,\qquad \forall v\in H_\beta.\]
Thus, using (\ref{delta}) and (\ref{wbc}) it follows (\ref{pbuw}).

\begin{theorem}
\label{teo2}
Under the assumptions
(\ref{defs})-(\ref{defj}), 
\begin{eqnarray}\label{duow}
\exists { u}^0\in {H_\beta}:\quad
u^0=S\mbox{ on }\Gamma;\\
 \int_\Gamma j(S)ds\leq C(\|S\|_{2,\Gamma}^2+1),\label{js2}
\end{eqnarray}
where $C$ stands for a positive constant,
and $f\in C^{0,1}(0,T;(H_\beta)')$ with 
the Lipschitz constant $d$, that is,
\begin{equation}\label{deffw}
\|f(\tau)-f(t)\|_{(H_\beta)'}\leq d |\tau-t|,\qquad\forall \tau,t\in ]0,T[ ,
\end{equation}
there exists $ u\in L^\infty(0,T;H_\beta)$
 a unique weak solution in accordance to Definition \ref{dwt}.
\end{theorem}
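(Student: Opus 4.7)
The proof will follow the Rothe (time-discretization) scheme that the author announced. I would introduce a uniform partition $0=t_0<t_1<\cdots<t_N=T$ of step $\tau=T/N$, set $f^k=f(t_k)$ in $(H_\beta)'$ and take $u^0\in H_\beta$ to be the lifting of $S$ guaranteed by (\ref{duow}). At each level $k\geq 1$ I would solve the elliptic variational inequality: find $u^k\in H_\beta$ such that
\[
a_\beta(u^k,v-u^k)+\int_\Gamma\alpha\,\frac{u^k-u^{k-1}}{\tau}(v-u^k)\,ds+\int_\Gamma\{j(v)-j(u^k)\}\,ds\geq\langle f^k,v-u^k\rangle_\Omega
\]
for every $v\in H_\beta$, where $a_\beta(u,v):=\int_\Omega\sigma\nabla u\cdot\nabla v\,dx+\beta\int_\Gamma\nabla u\cdot\nabla v\,ds$. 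Since $a_\beta$ is continuous, symmetric and coercive on $H_\beta$ (by (\ref{defs}) and Poincaré) and the penalty $\int_\Gamma j(\cdot)\,ds+\frac{1}{2\tau}\int_\Gamma\alpha(\cdot)^2$ is proper, convex and lower semicontinuous (by (\ref{defa})-(\ref{defj})), existence and uniqueness of $u^k$ at each step follow from the classical theory of variational inequalities of the second kind.

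Next I would extract three uniform a priori bounds. Testing the $k$-th inequality with $v=u^{k-1}$ and summing from $k=1$ to $m$, the telescoping on $a_\beta(u^k,u^k)$ combined with the nonnegativity of $j$ and (\ref{defa}) gives $\max_k\|u^k\|_{H_\beta}+\sum_k\tau\|\delta u^k\|_{2,\Gamma}^2\leq C$, with $\delta u^k:=(u^k-u^{k-1})/\tau$. The crucial $L^\infty$-in-time bound on $\delta u^k$ comes from writing the inequalities at levels $k$ and $k-1$, testing them respectively with $v=u^{k-1}$ and $v=u^k$, and adding: the two $j$-contributions cancel by their asymmetric placement, yielding
\[
\int_\Gamma\alpha|\delta u^k|^2\,ds+\tau\, a_\beta(\delta u^k,\delta u^k)\leq\int_\Gamma\alpha\,\delta u^{k-1}\delta u^k\,ds+\langle f^k-f^{k-1},\delta u^k\rangle_\Omega.
\]
Young's inequality together with the Lipschitz hypothesis (\ref{deffw}) and a discrete Gronwall argument give $\max_k\|\delta u^k\|_{2,\Gamma}\leq C$, provided the starting datum $\delta u^1$ is controlled; this is obtained by testing the first step with $v=u^0$ and invoking (\ref{duow})-(\ref{js2}).

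With these bounds in hand I would form the piecewise-constant and piecewise-affine Rothe interpolants $\bar u_\tau,\tilde u_\tau$; the estimates above give $\bar u_\tau,\tilde u_\tau$ bounded in $L^\infty(0,T;H_\beta)$ with $\partial_t\tilde u_\tau$ bounded in $L^\infty(0,T;L^2(\Gamma))$ and $\bar u_\tau-\tilde u_\tau\to 0$ in $L^2(\Sigma)$. Along a subsequence they converge weakly-$*$ to some $u\in L^\infty(0,T;H_\beta)$, and by compactness of the trace embedding $H_\beta\hookrightarrow L^2(\Gamma)$ strongly in $L^2(\Sigma)$. In the discrete inequality integrated in time, weak convergence handles the linear $a_\beta$ and $\alpha\partial_t$ terms, Fatou's lemma combined with strong $L^2(\Sigma)$-convergence and the lower semicontinuity of $j$ handles the $\int j$-term, and the standard convexity trick $\limsup\int_0^T a_\beta(u_\tau,u_\tau)\,dt\leq\int_0^T a_\beta(u,u)\,dt$ (obtained by feeding $v=u$ back into the discrete inequality) closes the passage to the limit. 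The initial condition (\ref{ci}) survives because $\tilde u_\tau(0)=u^0$ and $\tilde u_\tau$ is uniformly Lipschitz in $L^2(\Gamma)$.

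Uniqueness is by the standard monotonicity trick: given two solutions $u,\bar u$, insert $v=\bar u$ on $[0,s]$, $v=u$ on $[s,T]$ into the inequality for $u$, and symmetrically for $\bar u$; adding, the $j$-terms cancel and one obtains $\int_0^s a_\beta(u-\bar u,u-\bar u)\,dt+\tfrac{1}{2}\int_\Gamma\alpha|(u-\bar u)(s)|^2\,ds\leq 0$, forcing $u\equiv\bar u$. The delicate point of the whole argument is the $L^\infty_tL^2_x(\Gamma)$-bound on $\delta u^k$: the cancellation of $j$-terms is the only reason this nonlinear problem still admits such an estimate, and initializing the discrete Gronwall iteration requires the compatibility (\ref{duow})-(\ref{js2}) and the $C^{0,1}$-regularity (\ref{deffw}) in an essential way.
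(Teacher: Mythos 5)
Your overall architecture (Rothe scheme, elliptic variational inequality of the second kind at each step, telescoping energy estimate, compactness, lower semicontinuity of $j$, monotonicity for uniqueness) is the same as the paper's. But there is a genuine gap in your a priori estimates: the uniform-in-$k$ bound $\max_k\|\delta u^k\|_{2,\Gamma}\leq C$ is not available under the hypotheses of Theorem \ref{teo2}. Your discrete Gronwall iteration must be initialized by a bound on $\delta u^1=(u^1-u^0)/\tau$, and you claim this follows ``by testing the first step with $v=u^0$ and invoking (\ref{duow})--(\ref{js2}).'' It does not: that test function yields
\[
\frac{1}{\tau}\int_\Gamma\alpha\,(u^1-u^0)^2\,ds\leq a_\beta(u^1,u^0-u^1)+\int_\Gamma\bigl(j(S)-j(u^1)\bigr)ds-\langle f^1,u^0-u^1\rangle_\Omega,
\]
whose right-hand side is only $O(1)$, so after dividing by $\tau$ once more you get $\|\delta u^1\|_{2,\Gamma}^2\leq C/\tau$, which blows up. To make $\|\delta u^1\|_{2,\Gamma}$ bounded one needs the right-hand side above to be $O(\tau)$, and that is exactly what the compatibility condition (\ref{wuoi}) provides; this is why the paper proves the $L^\infty(0,T;L^2(\Gamma))$ bound on the discrete derivative only in Theorem \ref{prop2}, under that extra assumption, and not in Theorem \ref{teo2}.

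The gap is repairable because the stronger bound is not actually needed. The first telescoping estimate (your test $v=u^{k-1}$, the paper's Proposition \ref{zz}) already gives $\sum_k\tau\|\delta u^k\|^2_{2,\Gamma}\leq C$, i.e.\ the discrete derivative is bounded in $L^2(\Sigma)$, and this suffices both for the strong convergence of the interpolants in $L^2(\Sigma)$ (via $L^\infty(0,T;H_\beta)$ boundedness plus $L^2(\Sigma)$ boundedness of $\partial_t\tilde u_\tau$, as in the paper's Section \ref{passw}) and for recovering the initial condition, since $u(t)-S=\int_0^t Z(\tau)\,d\tau$ with $Z\in L^2(\Sigma)$ makes $t\mapsto u(t)|_\Gamma$ uniformly $1/2$-H\"older in $L^2(\Gamma)$, not Lipschitz as you state. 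You should therefore delete the $L^\infty_tL^2(\Gamma)$ estimate on $\delta u^k$ from this proof (it belongs to the regularity theorem), and correspondingly weaken the two places where you invoke it. One further small point: in your first estimate the telescoping of the $j$-terms leaves $\int_\Gamma j(S)\,ds$ on the right-hand side, so (\ref{js2}) is used there and not merely the nonnegativity of $j$; and the Abel summation of $\sum_k\langle f^k,u^k-u^{k-1}\rangle$ requires the preliminary $L^2(0,T;H_\beta)$ bound obtained by testing with $v=0$ (the paper's Proposition \ref{tildew}), which your write-up skips.
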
 

\begin{remark}
The assumption (\ref{js2}) yields if for instance $j$ verifies 
$j(d)\leq C(d^2+1)$ for all $d\in\mathbb R$.
Notice that (\ref{duow}) guarantees that $S\in L^2(\Gamma)$ is
such that   $\beta\nabla S\in L^2(\Gamma)$. 
\end{remark}

\begin{theorem}\label{prop2}
Let the assumptions of Theorem \ref{teo2} be fulfilled.
 Moreover, if the compatibility condition 
\begin{eqnarray}
\int_\Omega\sigma\nabla u^0\cdot\nabla (v- u^0) dx+\beta\int_\Gamma \nabla u^0\cdot\nabla
(v- u^0) ds+\int_\Gamma \{j(v)-j(S)\}ds\geq\nonumber\\
\geq\langle f(0),v -u^0\rangle_\Omega\qquad\label{wuoi}
\end{eqnarray}
holds for all $ v\in H_\beta,$
 then $\partial_t u\in L^2(0,T;H_\beta)\cap  L^\infty(0,T;L^2(\Gamma))$. In particular, 
$ u\in C([0,T];H_\beta)$.
\end{theorem}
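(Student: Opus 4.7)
The plan is to upgrade the Rothe time-discretization already constructed for Theorem \ref{teo2}. Writing $\tau=T/N$ and $u^k$ for the discrete approximations of $u(k\tau)$, the key object is the difference quotient $w^k:=(u^k-u^{k-1})/\tau$, which should, in the limit $\tau\to 0$, converge to $\partial_t u$. Theorem \ref{teo2} already gives uniform bounds on $\{u^k\}$ in $H_\beta$; here I want to obtain uniform bounds on $\{w^k\}$ in $\ell^2(H_\beta)\cap\ell^\infty(L^2(\Gamma;\alpha\,ds))$. Passing to the limit along the same subsequence used for Theorem \ref{teo2} will then identify the limits and yield $\partial_t u\in L^2(0,T;H_\beta)\cap L^\infty(0,T;L^2(\Gamma))$.

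The workhorse is the following symmetrisation. I would test the discrete variational inequality at level $k$ (whose structure mimics (\ref{pbuw}) with $\partial_t u$ replaced by $w^k$) with $v=u^{k-1}$, and the inequality at level $k-1$ with $v=u^k$, and add. The bilinear terms combine to $-\int_\Omega\sigma|\nabla\delta^k|^2\,dx-\beta\int_\Gamma|\nabla\delta^k|^2\,ds$ with $\delta^k=u^k-u^{k-1}$, the boundary dynamic term produces $-\int_\Gamma\alpha\,w^k(w^k-w^{k-1})\,\tau\,ds$ after rearrangement, and crucially the subdifferential contributions $\int_\Gamma\{j(u^{k-1})-j(u^k)\}+\{j(u^k)-j(u^{k-1})\}\,ds$ cancel identically; this is what makes the argument insensitive to the mere convexity/l.s.c.\ of $j$. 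Using the elementary inequality $a(a-b)\geq\tfrac12(a^2-b^2)$ on the boundary term, the Lipschitz bound (\ref{deffw}) on the right-hand side, and Young's inequality, I arrive at
\begin{equation*}
\tfrac{\sigma_\#}{2}\,\tau\,\|w^k\|_{H_\beta}^2+\tfrac{\alpha_\#}{2}\|w^k\|_{2,\Gamma}^2\leq \tfrac{\alpha^\#}{2}\|w^{k-1}\|_{2,\Gamma}^2+\tfrac{d^2\tau}{2\sigma_\#},
\end{equation*}
and summing over $k=2,\ldots,m$ yields the desired bounds \emph{provided} $\|w^1\|_{2,\Gamma}$ is controlled independently of $\tau$.

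The hard part is precisely this first-step bound, and this is where the compatibility condition (\ref{wuoi}) enters. Since the construction has no ``level $k=0$'' inequality, I treat (\ref{wuoi}) as a surrogate: test it with $v=u^1$ and test the level-$1$ discrete inequality with $v=u^0$. Remembering that $u^0=S$ on $\Gamma$ by (\ref{duow}) and that the $j$-terms once again cancel, one obtains
\begin{equation*}
\int_\Omega\sigma|\nabla\delta^1|^2\,dx+\beta\int_\Gamma|\nabla\delta^1|^2\,ds+\tfrac{1}{\tau}\int_\Gamma\alpha(\delta^1)^2\,ds\leq \langle f(\tau)-f(0),\delta^1\rangle_\Omega\leq d\,\tau\,\|\delta^1\|_{H_\beta}.
\end{equation*}
Dividing by $\tau$ and applying Young gives $\sigma_\#\tau\|w^1\|_{H_\beta}^2+\alpha_\#\|w^1\|_{2,\Gamma}^2\leq d^2\tau/(2\sigma_\#)$, i.e.\ a bound on $\|w^1\|_{2,\Gamma}$ independent of $\tau$, as required.

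With these uniform estimates, standard Rothe-type arguments (extract weak-$\ast$ limits of the piecewise-constant and piecewise-linear interpolants of $\{u^k\}$ along a subsequence $\tau\to 0$) identify the limits as $u$ and $\partial_t u$ respectively in the stated spaces. Finally, since $u\in L^\infty(0,T;H_\beta)$ and $\partial_t u\in L^2(0,T;H_\beta)$, the Lions--Magenes-type embedding gives $u\in C([0,T];H_\beta)$. I expect the \emph{only} nonroutine step to be the first-step bound via (\ref{wuoi}); the remainder is a careful but standard combination of discrete integration by parts, cancellation of the $j$-terms through symmetric testing, and Young's inequality.
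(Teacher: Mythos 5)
Your proposal is correct and follows essentially the same route as the paper: symmetric testing of consecutive discrete inequalities (the paper tests both with the midpoint $v=(u^{i+1}+u^i)/2$, you cross-test with $u^{k-1}$ and $u^{k}$, which is equivalent and likewise disposes of the $j$-terms by convexity), telescoping of the dynamic boundary term via $2a(a-b)=a^2+(a-b)^2-b^2$, and the first-step bound obtained by pairing the compatibility condition (\ref{wuoi}) tested with $v=u^1$ against the level-one discrete inequality tested with $v=u^0$. One bookkeeping caution: in the telescoping step you should keep the weighted quantity $\int_\Gamma\alpha\,(w^k)^2\,ds$ on both sides and only replace $\alpha$ by $\alpha_\#$ and $\alpha^\#$ at the two endpoints of the sum, since iterating the displayed inequality $\alpha_\#\|w^k\|_{2,\Gamma}^2\leq\alpha^\#\|w^{k-1}\|_{2,\Gamma}^2+C\tau$ literally would produce an unbounded factor $(\alpha^\#/\alpha_\#)^k$.
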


The transmission problem in a thin porous layer, (\ref{delta})-(\ref{ci}) with $\beta=0$,  can be
obtained as the asymptotic limit, when a small parameter $\varepsilon $ goes
to zero, of the following perturbed problem, whenever the interface $\Gamma=\partial\Omega_1
\subset \Omega$, $\Gamma_1=\emptyset$ and  $\Gamma_D=\Gamma_2$,

\noindent $({\bf P}_\varepsilon)$ Find $u_\varepsilon:\Omega=\Omega_1
\cup\overline {S_\varepsilon }\cup\Omega_{2,\varepsilon }\rightarrow\mathbb R $ satisfying
\begin{eqnarray}
-\sigma_1\Delta u_\varepsilon =f_1&\mbox{ in }&\Omega_1;\nonumber\\
-\sigma_2\Delta u_\varepsilon =f_2&\mbox{ in }&\Omega_{2,\varepsilon };\nonumber\\
\varepsilon \gamma\Delta u_\varepsilon 
-{\alpha}\partial_t u_\varepsilon \in\partial
j(u_\varepsilon )&\mbox{ in }&S_\varepsilon \times ]0,T[;\nonumber\\
u_\varepsilon (\cdot,0)=u^0&\mbox{ in }&S_\varepsilon;\label{cis}\\
{[}u_\varepsilon ]=[\sigma\nabla u_\varepsilon \cdot{\bf n} ]
=0&\mbox{ on  }&\Gamma;\nonumber\\
{[}u_\varepsilon ]=[\sigma\nabla u_\varepsilon \cdot{\bf n}]
=0&\mbox{ on  }&\Gamma_\varepsilon :=\partial S_\varepsilon \setminus\Gamma;\nonumber\\
\nabla u_2\cdot{\bf  n}=0&\mbox{ on }&\partial\Omega\setminus\Gamma_2;\nonumber\\
u_2=0&\mbox{ on }&\Gamma_2,\nonumber
\end{eqnarray}
with $S_\varepsilon =\{\xi+\tau{\bf
  n}(\xi):\ \xi\in\Gamma,\ 0<\tau<\varepsilon \gamma(\xi)\}$ where $\gamma\in C^{0,1}(\Gamma)$ such that $
0<\gamma_\#\leq\gamma(\xi)\leq\gamma^\#$ for all $\xi\in\Gamma,$  and $\varepsilon >0$
such that $\overline{S_\varepsilon}\subset\Omega$.

Let us define the Hilbert space
\begin{eqnarray*}
X_\varepsilon =\{v\in H^1_{\Gamma_2}(\Omega_\varepsilon ):\ v_1=v|_{\Omega_1},\ 
v_{S_\varepsilon }=v|_{S_\varepsilon }, \ v_{2,\varepsilon }=v|_{\Omega_{2,\varepsilon }};\\
v_1=v_{S_\varepsilon }\mbox{ on }\Gamma,\ v_{S_\varepsilon }=v_{2,\varepsilon }\mbox{ on }
\Gamma_\varepsilon 
\},
\end{eqnarray*}
where $\Omega_\varepsilon =\Omega_1
\cup S_\varepsilon \cup\Omega_{2,\varepsilon }$.

\begin{prop}
\label{teo3}
Let the assumptions (\ref{defs})-(\ref{defj}), (\ref{duow}),
(\ref{deffw}) and $\beta=0$ be fulfilled, and (\ref{js2}) be replaced by
 $j(d)\leq C(d^2+1)$ for all $d\in\mathbb R$. Then
the  unique solution $ u$ of the problem
(\ref{delta})-(\ref{ci}) in accordance to Theorem \ref{teo2},
under the admissible test function space
${\mathcal X}:=L^2(0,T;H)\cap H^1(0,T;H^{1}_{}(\Omega\setminus \overline{\Omega_1}))$,
is the limit of the sequence of the  unique solutions $ u_\varepsilon $
  to the variational formulation of the perturbed problem $({\bf P}_\varepsilon)$
\begin{eqnarray}
\int^T_0\int_{\Omega_\varepsilon }\sigma_\varepsilon \nabla
u_\varepsilon \cdot\nabla(v-u_\varepsilon )dxdt+\int^T_0\int_{
S_\varepsilon }{\alpha\over\varepsilon   \gamma}\partial_t
u_\varepsilon (v-u_\varepsilon )dxdt+\nonumber\\+\int^T_0\int_{S_\varepsilon }{1\over
\varepsilon \gamma}\{j(v)-j(u_\varepsilon )\}dxdt
\geq\int^T_0\langle f_\varepsilon,v-u_\varepsilon\rangle_{\Omega_\varepsilon }dt,\ \forall
v\in L^2(0,T;X_\varepsilon ),\label{ueps}
\end{eqnarray}
 with (\ref{cis}),
$\sigma_\varepsilon =\sigma_1\chi_{\Omega_1}+\chi_{S
_\varepsilon }+\sigma_2\chi_{\Omega_{2,\varepsilon }}$ and
$f_\varepsilon =f_1\chi_{\Omega_1}+f_2\chi_{\Omega_{2,\varepsilon }}$.
\end{prop}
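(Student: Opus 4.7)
For every fixed $\varepsilon>0$ the problem $({\bf P}_\varepsilon)$ has exactly the same structural features as the Wentzell-type problem covered by Theorem \ref{teo2}: the variational inequality (\ref{ueps}) is governed by a bounded coercive bilinear form (the $\sigma_\varepsilon$ term, coercive on $X_\varepsilon$ thanks to (\ref{defs}) and the trace inequality on $\Gamma$), a positive linear operator in $\partial_t u_\varepsilon$ with weight $\alpha/(\varepsilon\gamma)\geq \alpha_\#/(\varepsilon\gamma^\#)$, and the convex lower semicontinuous functional $j$. Therefore I would invoke the same Rothe scheme used for Theorem \ref{teo2}, applied on the fixed domain $\Omega_\varepsilon$ and the space $X_\varepsilon$, to obtain a unique $u_\varepsilon\in L^\infty(0,T;X_\varepsilon)$ with $\partial_t u_\varepsilon\in L^2(S_\varepsilon\times(0,T))$. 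I would not redo this derivation; I would only point out that the initial datum $u^0$ from (\ref{duow}) (trivially extended into $S_\varepsilon\cup\Omega_{2,\varepsilon}$) is admissible and that (\ref{js2}) follows from the quadratic bound on $j$ imposed in the statement, so the compatibility condition (\ref{wuoi}) is met.

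\textbf{Step 2: Uniform a priori estimates.} The crux is to obtain bounds on $u_\varepsilon$ independent of $\varepsilon$. Testing (\ref{ueps}) with $v=u^0$ and using the convexity of $j$, (\ref{defs})--(\ref{defj}) and (\ref{deffw}), I would derive
\[
\|\nabla u_\varepsilon\|_{L^2(0,T;L^2(\Omega_\varepsilon))}^2+\int_0^T\!\!\int_{S_\varepsilon}\frac{\alpha}{\varepsilon\gamma}|\partial_t u_\varepsilon|^2\,dxdt\leq C,
\]
with $C$ independent of $\varepsilon$, the extra terms controlled by (\ref{js2}) and the Lipschitz bound on $f$. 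Using the change of variables $\Psi_\varepsilon:\Gamma\times(0,1)\to S_\varepsilon$, $\Psi_\varepsilon(\xi,s)=\xi+s\varepsilon\gamma(\xi){\bf n}(\xi)$, whose Jacobian equals $\varepsilon\gamma(\xi)+O(\varepsilon^2)$, the weight $1/(\varepsilon\gamma)$ balances $|S_\varepsilon|$ so that $\int_{S_\varepsilon}(1/(\varepsilon\gamma))\phi\,dx\to\int_\Gamma\phi|_\Gamma\,ds$ for any admissible $\phi$. This same transformation lets me translate the $\partial_t u_\varepsilon$ bound into an $L^2(\Sigma)$ bound for the trace of $u_\varepsilon$ on $\Gamma$.

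\textbf{Step 3: Compactness and passage to the limit.} Extend $u_\varepsilon|_{\Omega_{2,\varepsilon}}$ to $\Omega_2$ by a uniformly bounded extension operator (available because $\Omega_{2,\varepsilon}$ Lipschitz-approximates $\Omega_2$ for small $\varepsilon$); call the extension $\tilde u_\varepsilon$. Together with $u_\varepsilon|_{\Omega_1}$, the estimates of Step 2 yield, up to a subsequence, $u_\varepsilon\rightharpoonup u$ weakly in $L^2(0,T;H_0)$ where $H_0=H_\beta|_{\beta=0}$, and the traces on $\Gamma$ converge strongly in $L^2(\Sigma)$ by Aubin--Lions (using the $\partial_t$-bound on $\Gamma$ recovered in Step 2). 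Choosing any $v\in\mathcal X$ and extending it constantly along the normal in $S_\varepsilon$ as a competitor $v_\varepsilon\in X_\varepsilon$, I would pass to the limit term by term in (\ref{ueps}): the bulk $\sigma_\varepsilon$-term splits into contributions over $\Omega_1$, $\Omega_{2,\varepsilon}$ (both converging to the corresponding bulk integrals in (\ref{pbuw}) with $\beta=0$), and over $S_\varepsilon$ (which vanishes since $|S_\varepsilon|\to 0$); the $\partial_t$-term and the $j$-term converge to the surface integrals on $\Sigma$ using the Jacobian scaling of $\Psi_\varepsilon$, weak lower semicontinuity of the convex functional $v\mapsto\int_\Sigma j(v)\,ds dt$, and strong convergence of traces.

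\textbf{Main obstacle.} The delicate point is the passage to the limit in the nonlinear convex term $\int_0^T\int_{S_\varepsilon}(1/(\varepsilon\gamma))\{j(v_\varepsilon)-j(u_\varepsilon)\}\,dxdt$; I cannot replace $u_\varepsilon$ inside $j$ directly by its trace, so I will use the pulled-back profile $\hat u_\varepsilon(\xi,s,t)=u_\varepsilon(\Psi_\varepsilon(\xi,s),t)$, show via the transversal gradient bound from Step 2 that $\hat u_\varepsilon(\xi,s,t)-u_\varepsilon|_\Gamma(\xi,t)\to 0$ in $L^2(\Gamma\times(0,1)\times(0,T))$, and then apply Fatou together with the lower semicontinuity of $j$. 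Once the limit variational inequality is identified, the uniqueness statement of Theorem \ref{teo2} guarantees $u$ is the unique solution to (\ref{delta})--(\ref{ci}), hence the whole family $\{u_\varepsilon\}$ converges without extraction.
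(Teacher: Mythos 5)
Your overall skeleton (Rothe scheme on the fixed domain $\Omega_\varepsilon$, uniform-in-$\varepsilon$ estimates, then $\varepsilon\to 0$) coincides with the paper's Section \ref{ssasymp}, and your Step 1 is essentially the paper's Section \ref{exue} (note, though, that the compatibility condition (\ref{wuoi}) is neither assumed in Proposition \ref{teo3} nor implied by its hypotheses; it is simply not needed here). Two points in Steps 2--3, however, contain genuine gaps. First, testing (\ref{ueps}) with the time-independent $v=u^0$ cannot produce the bound on $\int_0^T\int_{S_\varepsilon}\frac{\alpha}{\varepsilon\gamma}|\partial_t u_\varepsilon|^2$: the resulting term $\int\frac{\alpha}{\varepsilon\gamma}\partial_t u_\varepsilon\,(u^0-u_\varepsilon)$ has the wrong sign for that purpose. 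The estimate is obtained, as in the paper, at the discrete level by testing (\ref{tthin}) with $v=u^i$ (the analogue of Proposition \ref{zz}), together with the inequality $\frac{1}{\varepsilon}\|u^0\|^2_{2,S_\varepsilon}\leq C(\|u^0\|^2_{2,\Gamma}+\varepsilon\|\nabla u^0\|^2_{2,S_\varepsilon})$ to keep the right-hand side uniform in $\varepsilon$.

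Second, and more seriously, your passage to the limit hinges on strong $L^2(\Sigma)$ convergence of the traces $u_\varepsilon|_\Gamma$ obtained by Aubin--Lions from ``the $\partial_t$-bound on $\Gamma$ recovered in Step 2.'' That bound is not available: the weighted estimate on $\partial_t u_\varepsilon$ in $S_\varepsilon$ controls the time derivative of the \emph{fiber average} $\bar u_\varepsilon(\xi,t)=\frac{1}{\varepsilon\gamma}\int_0^{\varepsilon\gamma}u_\varepsilon(\xi+\tau{\bf n},t)\,d\tau$, not of the trace itself, while the spatial $H^{1/2}(\Gamma)$ bound holds for the trace but not (uniformly in $\varepsilon$) for $\bar u_\varepsilon$; splicing these two pieces into a compactness statement is exactly the nontrivial step you leave unaddressed, and it is what the critical term $\int_0^T\int_{S_\varepsilon}\frac{\alpha}{\varepsilon\gamma}\partial_t u_\varepsilon(v-u_\varepsilon)$ requires (for the convex $j$-term alone, weak convergence plus weak lower semicontinuity of convex integrands would already suffice, so no Fatou argument is needed there). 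The paper sidesteps the difficulty entirely: it replaces (\ref{ueps}) by the equivalent ``transposed'' inequality (\ref{wteo4}), in which the time derivative sits on the test function $v\in{\mathcal X}_\varepsilon$ and the initial layer appears through $\int_{S_\varepsilon}\frac{\alpha}{2\varepsilon\gamma}|v(0)-u^0|^2$; then only the weak convergences (\ref{linf}) and Lemma \ref{ljf} (parts a) and b), applied to $w_\varepsilon=(v-u_\varepsilon)\partial_t v$ with the gradient bound for $q\leq n/(n-1)$) are needed. Either you adopt that reformulation, or you must supply the missing trace-compactness argument (e.g.\ via $\bar u_\varepsilon$ and an Ehrling-type interpolation) and the integration-by-parts/lower-semicontinuity treatment of $\int\frac{\alpha}{\varepsilon\gamma}\partial_t u_\varepsilon\,u_\varepsilon$ at $t=T$; as written, the proof does not close.
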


\subsection{Signorini-type transmission}

Here, we keep the notation of jump $[v]=v_2-v_1$ for any vector ${\bf v}=(v_1,v_2)$.
However, 
in order to differentiate this case from the above, let us set every vector by boldface.
In general if $v_1\not =v_2$ on $\Gamma$, their weak derivatives do not exist.
Let us define the Hilbert space 
\[
{\bf V}=\{{\bf v}=(v_1,v_2):\ v_1\in
H^1_{\Gamma_1}(\Omega_1);\ v_2\in H^1_{\Gamma_2}(\Omega_2)\}
\hookrightarrow L^2(\Omega_1)\times L^2(\Omega_2)\]
endowed with the norm (cf. Lemma \ref{lpoin})
\[
\|{\bf v}\|_{\bf V}=\|\nabla v_1\|_{2,\Omega_1}+\|\nabla v_2\|_{2,\Omega_2}+
\| [v]\|_{2,\Gamma}.
\]
For ${\bf v}\in {\bf V}$, 
${\bf v}|_\Gamma\in H^{1/2}_{00}( \Gamma)\times
H^{1/2}_{00}(\partial\Omega_2\setminus\bar \Gamma_2)$.

\begin{definition} \label{dsp}
We say that a function ${\bf u}=(u_1,u_2)
\in L^2(0,T;{\bf V})$ is a weak solution to the problem (\ref{delta})-(\ref{mbc}) 
with (\ref{bcsp})-(\ref{cf}) 
if $\partial_t [u]\in
L^2(\Sigma)$ and it satisfies (\ref{cf}) and the variational formulation
\begin{eqnarray}
\int^T_0\int_\Omega\sigma\nabla{\bf
    u}\cdot\nabla({\bf v}-{\bf u})dxdt+\int^T_0\langle g,v_1-u_1\rangle_\Gamma dt+\nonumber\\
+\int^T_0\int_\Gamma\alpha\partial_t[u]([v]-[u])dsdt
+\int_0^T\int_\Gamma \{j([v])-j([u])\}dsdt\geq \nonumber\\
\geq\int^T_0\langle {\bf f},{\bf v}-{\bf u}\rangle_\Omega dt ,\quad\forall {\bf v}\in
L^2(0,T;{\bf V}),\label{pbu}\end{eqnarray}
with
\[
\sigma=\sigma_1\chi_{\Omega_1}+\sigma_2\chi_{\Omega_{2}}\qquad\mbox{and}\qquad
{\bf f}=(f_1,f_2).\]
\end{definition}
 Here, we use the same notation
$\langle\cdot,\cdot\rangle_\Omega$ to denote
the duality pairing $\langle\cdot,\cdot\rangle_{{\bf V}'\times {\bf V}}$.
The symbol $\langle\cdot,\cdot\rangle_\Gamma$ stands for
the duality pairing $\langle\cdot,\cdot\rangle_{Y'\times Y}$, using
 the notation $Y=H^{1/2}_{00}(\Gamma)$.

For ${\bf u}=(u_1,u_2)$ such that the homogeneous Neumann boundary
condition in (\ref{mbc}) is satisfied,
 the Green formula yields
\[
-\langle\nabla\cdot(\sigma\nabla {\bf u}), {\bf v}\rangle_\Omega=
\int_\Omega \sigma\nabla {\bf u}\cdot\nabla
{\bf v}dx+\langle[\sigma\nabla {u}\cdot{\bf n}],v_1\rangle_\Gamma+\langle\sigma_2\nabla
u_2\cdot{\bf n},[v]\rangle_\Gamma,\]
for all ${\bf v}\in {\bf V}$.
Thus, using (\ref{delta}) and (\ref{bcsp})-(\ref{sbc})
it follows (\ref{pbu}).
 
\begin{theorem}\label{teo1}
Assuming (\ref{defs})-(\ref{defj}), (\ref{js2}),
${\bf f}$ and $g$ are Lipschitz functions in the following sense:
 there exist two positive constants $d_1$ and $d_2$ such that
\begin{eqnarray}
\|{\bf f}(\tau)-{\bf f}(t)\|_{{\bf V}'}\leq d_1 |\tau-t|&&\label{deff}\\\label{defg}
\|g(\tau)-g(t)\|_{Y'}\leq d_2|\tau-t|,&&\qquad\forall \tau,t\in ]0,T[ .
\end{eqnarray}
and
\begin{equation}\label{duo}
\exists {\bf u}^0\in {\bf V}:\quad
[u^0]=S\mbox{ on }\Gamma,
\end{equation}
there exists $ {\bf u}\in L^\infty(0,T;{\bf V})$
 a unique weak solution in accordance to Definition \ref{dsp}.
\end{theorem}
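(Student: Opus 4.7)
The plan is to invoke the Rothe method announced in the introduction, mirroring the strategy used for the Wentzell case in Theorem \ref{teo2}. Partition $[0,T]$ into $N$ equal intervals of length $\tau=T/N$, set $\mathbf{f}^n=\mathbf{f}(n\tau)$, $g^n=g(n\tau)$, and define recursively $\mathbf{u}^n\in\mathbf{V}$, starting from the $\mathbf{u}^0$ of (\ref{duo}), as the unique minimizer on $\mathbf{V}$ of the proper, strictly convex, lower semicontinuous, coercive functional
\[
J_n(\mathbf{v})=\tfrac12\int_\Omega\sigma|\nabla\mathbf{v}|^2\,dx+\int_\Gamma\frac{\alpha}{2\tau}\bigl([v]-[u^{n-1}]\bigr)^2ds+\int_\Gamma j([v])\,ds-\langle\mathbf{f}^n,\mathbf{v}\rangle_\Omega-\langle g^n,v_1\rangle_\Gamma.
\]
Coercivity on $\mathbf{V}$ follows from (\ref{defs})--(\ref{defa}), (\ref{defj}) and the definition of the $\mathbf{V}$-norm (Lemma \ref{lpoin}); existence and uniqueness of $\mathbf{u}^n$ are standard convex analysis. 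The Euler inequality for $J_n$ is precisely the backward-Euler discretization of (\ref{pbu}).

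The next step is to derive a priori estimates uniform in $\tau$. A first energy estimate, obtained by testing with $\mathbf{v}=\mathbf{u}^{n-1}$ and telescoping, produces $\max_n\|\mathbf{u}^n\|_\mathbf{V}\leq C$ together with $\sum_{n=1}^N\tau\,\|([u^n]-[u^{n-1}])/\tau\|_{2,\Gamma}^2\leq C$, where (\ref{js2}) absorbs $\int_\Gamma j(S)\,ds$ and (\ref{deff})--(\ref{defg}) control the shifted data. A sharper estimate, obtained by subtracting the inequality at step $n-1$ from that at step $n$ and testing with $\mathbf{u}^n-\mathbf{u}^{n-1}$, kills the $j$-terms by monotonicity of $\partial j$ and, combined with a discrete Gronwall argument exploiting (\ref{deff})--(\ref{defg}), yields $\max_n\|([u^n]-[u^{n-1}])/\tau\|_{2,\Gamma}\leq C$. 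Assembling the piecewise constant interpolant $\bar{\mathbf u}_\tau$ and piecewise affine interpolant $\hat{\mathbf u}_\tau$, one obtains sequences bounded in $L^\infty(0,T;\mathbf{V})$ with $\partial_t[\hat u_\tau]$ bounded in $L^\infty(0,T;L^2(\Gamma))\hookrightarrow L^2(\Sigma)$.

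Extract a subsequence $\tau_k\to 0$ with $\bar{\mathbf u}_{\tau_k}\rightharpoonup^{*}\mathbf{u}$ in $L^\infty(0,T;\mathbf{V})$; by compactness of the trace from $\mathbf{V}$ into $L^2(\Gamma)^2$ and the Aubin--Lions lemma applied to the jump, $[\bar u_{\tau_k}]\to[u]$ strongly in $L^2(\Sigma)$, while $\partial_t[\hat u_{\tau_k}]\rightharpoonup\partial_t[u]$ weakly in $L^2(\Sigma)$. Pass to the limit in the time-integrated discrete variational inequality with $\mathbf{v}\in L^2(0,T;\mathbf{V})$ fixed: the linear and duality terms pass trivially; the $j$-integral by lower semicontinuity of $\int_\Sigma j(\cdot)\,ds\,dt$ on $L^2(\Sigma)$; and the quadratic gradient term $\int\sigma|\nabla\bar{\mathbf u}_{\tau_k}|^2$ by writing the inequality in the form $a(\bar{\mathbf u}_{\tau_k},\mathbf{v})-a(\bar{\mathbf u}_{\tau_k},\bar{\mathbf u}_{\tau_k})\geq\ldots$ and invoking weak lower semicontinuity of the convex quadratic form. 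The initial condition (\ref{cf}) is preserved because $\hat{\mathbf u}_{\tau_k}(0)=\mathbf{u}^0$ and $[\hat u_{\tau_k}]\to[u]$ in $C([0,T];L^2(\Gamma))$. Uniqueness is a standard inequality-against-inequality argument: for two solutions $\mathbf{u},\tilde{\mathbf u}$ the choices $\mathbf{v}=\tilde{\mathbf u}$ and $\mathbf{v}=\mathbf{u}$ respectively, added and combined with monotonicity of $\partial j$, give
\[
\tfrac12\tfrac{d}{dt}\int_\Gamma\alpha([u]-[\tilde u])^2\,ds+\int_\Omega\sigma|\nabla(\mathbf u-\tilde{\mathbf u})|^2\,dx\leq 0,
\]
which with $[u-\tilde u]|_{t=0}=0$ and Gronwall forces $\mathbf{u}=\tilde{\mathbf u}$.

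The main obstacle I expect is the simultaneous limit passage in the two weakly-converging quadratic pieces on the diagonal: the gradient term $\int\sigma|\nabla\bar{\mathbf u}_{\tau_k}|^2$ and the mixed parabolic pairing $\int_0^T\!\!\int_\Gamma\alpha\,\partial_t[\hat u_{\tau_k}]\,[\bar u_{\tau_k}]\,ds\,dt$, which involves both interpolants. The standard remedy is a discrete integration by parts in time for the second, rewriting it up to an $O(\tau)$ correction as the boundary contribution $\tfrac12\int_\Gamma\alpha([u^N]^2-S^2)\,ds$, after which weak lower semicontinuity at $t=T$ can be combined with the Minty--Browder bound on the gradient term to produce the target variational inequality in the limit.
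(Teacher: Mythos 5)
Your overall architecture --- backward Euler in time, two energy estimates, piecewise constant and affine interpolants, Aubin--Lions compactness for the jump on $\Gamma$, weak lower semicontinuity for the quadratic gradient term, and the standard two-solution argument for uniqueness --- is the same Rothe scheme the paper uses (Propositions \ref{propui}, \ref{tilde}, \ref{UU}, \ref{duu} and Section \ref{passm}). Obtaining $\mathbf{u}^n$ as the minimizer of the convex functional $J_n$ rather than via the maximal monotone operator theory of \cite{z} is an equivalent route here, since the bilinear form is symmetric; it even dispenses with the restriction $h\le\alpha_\#/\sigma_\#$ (i.e.\ $m\ge\sigma_\#T/\alpha_\#$) that the paper imposes to get coercivity in the $\mathbf{V}$-norm.

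One step fails as stated: the ``sharper estimate'' $\max_n\|([u^n]-[u^{n-1}])/\tau\|_{2,\Gamma}\le C$ uniformly in $\tau$. Subtracting consecutive discrete inequalities and testing with $\mathbf{u}^n-\mathbf{u}^{n-1}$ only propagates a bound on $U^n=([u^n]-[u^{n-1}])/\tau$ \emph{from} a bound on $U^1=([u^1]-S)/\tau$, and controlling $U^1$ as $\tau\to 0$ requires $\mathbf{u}^0$ to be (almost) a solution of the stationary inequality at $t=0$ --- precisely the compatibility condition (\ref{uoi}), which is \emph{not} among the hypotheses of Theorem \ref{teo1}. This is exactly the extra regularity the paper defers to Theorem \ref{prop1}. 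The slip is not fatal: everything you use downstream (Aubin--Lions for $[\bar u_{\tau}]$, weak convergence of $\partial_t[\hat u_{\tau}]$, recovery of the initial condition, and the weak-times-strong limit of $\int_\Sigma\alpha\,\partial_t[\hat u_{\tau}]\,[\bar u_{\tau}]\,ds\,dt$) needs only the $L^2(\Sigma)$ bound on the discrete derivative, which your first energy estimate already delivers and which is the content of (\ref{ui2}). Drop the claimed $L^\infty(0,T;L^2(\Gamma))$ bound and the proof closes. Relatedly, the discrete integration by parts proposed in your last paragraph is superfluous once the strong $L^2(\Sigma)$ convergence of the jump is in hand; the paper reserves that device (the reformulation with $\partial_t v$ in place of $\partial_t u$) for the thin-layer limit of Proposition \ref{teo3}, not for the passage $m\to+\infty$.
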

\begin{remark}
The  assumption (\ref{duo}) implies that
\[\|S\|_{2,\Gamma}\leq\|[u^0]\|_{2,\Gamma}\leq
\|{\bf u}^0\|_{\bf V}.\]
\end{remark}
\begin{theorem}\label{prop1}
Let the assumptions of Theorem \ref{teo1} be fulfilled.
 Moreover, if the compatibility condition 
\begin{equation}\label{uoi}
\int_\Omega\sigma\nabla {\bf u}^0\cdot\nabla ({\bf v}- {\bf u}^0) dx+\langle g(0)
,v_1- u^0_1\rangle_\Gamma
+\int_\Gamma \{j([v])-j(S)\}ds\geq\langle {\bf f}(0),{\bf v} -{\bf u}^0\rangle
_\Omega,
\end{equation}
holds for all ${\bf v}\in {\bf V}$,
then $\partial_t {\bf u}\in L^2(0,T;{\bf V})\cap  L^\infty(0,T;L^2(\Gamma))$.  In particular, 
${\bf u}\in C([0,T];{\bf V})$.
\end{theorem}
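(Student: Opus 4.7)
The plan is to revisit the Rothe time-discretization built for Theorem \ref{teo1} and extract uniform estimates on the discrete time derivatives of the approximating step functions, then pass to the limit. Let $\tau=T/N$, $t_k=k\tau$, $\mathbf{f}^k=\mathbf{f}(t_k)$, $g^k=g(t_k)$, and let $\mathbf{u}^k\in\mathbf{V}$ denote the $k$-th approximation solving, for each $k\geq 1$, the elliptic variational inequality
\[
\int_\Omega\sigma\nabla\mathbf{u}^k\!\cdot\!\nabla(\mathbf{v}-\mathbf{u}^k)dx+\langle g^k,v_1-u_1^k\rangle_\Gamma+\int_\Gamma\alpha\tfrac{[u^k]-[u^{k-1}]}{\tau}([v]-[u^k])ds+\int_\Gamma\{j([v])-j([u^k])\}ds\geq\langle\mathbf{f}^k,\mathbf{v}-\mathbf{u}^k\rangle_\Omega
\]
for every $\mathbf{v}\in\mathbf{V}$, with $[u^0]=S$.

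Introduce the discrete derivative $\partial_\tau\mathbf{u}^k=(\mathbf{u}^k-\mathbf{u}^{k-1})/\tau$. The first step is the standard \emph{testing trick} for subdifferential inclusions: in the inequality at time $t_k$ test with $\mathbf{v}=\mathbf{u}^{k-1}$, in the inequality at time $t_{k-1}$ test with $\mathbf{v}=\mathbf{u}^k$, and add. The $j$ contributions cancel exactly, and after rearrangement one obtains
\[
\int_\Omega\sigma|\nabla(\mathbf{u}^k-\mathbf{u}^{k-1})|^2dx+\int_\Gamma\alpha\tfrac{([u^k]-[u^{k-1}])-([u^{k-1}]-[u^{k-2}])}{\tau}\cdot([u^k]-[u^{k-1}])ds\leq \langle \mathbf{f}^k-\mathbf{f}^{k-1},\mathbf{u}^k-\mathbf{u}^{k-1}\rangle_\Omega+\langle g^k-g^{k-1},u_1^k-u_1^{k-1}\rangle_\Gamma.
\]
Applying the elementary identity $2(a-b)a\geq a^2-b^2$ to the $\alpha$-term, dividing by $\tau^2$, and using (\ref{deff})--(\ref{defg}) together with Young's inequality gives, for $k\geq 2$,
\[
\tfrac{\sigma_\#}{2}\|\nabla\partial_\tau\mathbf{u}^k\|_{2,\Omega}^2+\tfrac{\alpha_\#}{2\tau}\bigl(\|\partial_\tau[u^k]\|_{2,\Gamma}^2-\|\partial_\tau[u^{k-1}]\|_{2,\Gamma}^2\bigr)\leq C\bigl(d_1^2+d_2^2\bigr),
\]
where the $\|u_1\|_Y$ contribution is controlled by $\|\nabla u_1\|_{2,\Omega_1}$ plus $\|[u]\|_{2,\Gamma}$ through the trace bound built into the $\mathbf{V}$-norm.

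The second step is the \emph{base case} $k=1$. This is precisely where the compatibility condition (\ref{uoi}) is used: it plays the role of a zeroth step inequality. Testing (\ref{uoi}) with $\mathbf{v}=\mathbf{u}^1$ and the first discrete inequality with $\mathbf{v}=\mathbf{u}^0$ and adding, the $j$ terms again cancel (since $S=[u^0]$), yielding
\[
\sigma_\#\|\nabla(\mathbf{u}^1-\mathbf{u}^0)\|_{2,\Omega}^2+\tfrac{\alpha_\#}{\tau}\|[u^1]-[u^0]\|_{2,\Gamma}^2\leq\tau\bigl(d_1\|\mathbf{u}^1-\mathbf{u}^0\|_{\mathbf{V}}+d_2\|u_1^1-u_1^0\|_Y\bigr),
\]
so after absorbing on the left one gets $\|\partial_\tau\mathbf{u}^1\|_{\mathbf{V}}+\|\partial_\tau[u^1]\|_{2,\Gamma}\leq C$ with $C$ independent of $\tau$. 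Summing the recursion of the previous step from $k=2$ to any $m\leq N$ and combining with this initialization produces the discrete analogues
\[
\max_{1\leq k\leq N}\|\partial_\tau[u^k]\|_{2,\Gamma}^2+\sum_{k=1}^N\tau\|\partial_\tau\mathbf{u}^k\|_{\mathbf{V}}^2\leq C,
\]
uniformly in $\tau$.

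Finally, one transfers these bounds to the Rothe piecewise-linear interpolant $\mathbf{u}^\tau$ and piecewise-constant interpolant $\bar{\mathbf{u}}^\tau$ used in the proof of Theorem \ref{teo1}: the first estimate gives $\partial_t\mathbf{u}^\tau$ bounded in $L^\infty(0,T;L^2(\Gamma))$ (restricted to the trace component $[u]$) and in $L^2(0,T;\mathbf{V})$. By weak-$*$ and weak compactness, together with the uniqueness already established, the limit $\mathbf{u}$ inherits $\partial_t\mathbf{u}\in L^2(0,T;\mathbf{V})\cap L^\infty(0,T;L^2(\Gamma))$. The continuity $\mathbf{u}\in C([0,T];\mathbf{V})$ is then immediate from the embedding $H^1(0,T;\mathbf{V})\hookrightarrow C([0,T];\mathbf{V})$.

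The delicate point, and the step I expect to be the main obstacle, is the initialization at $k=1$: one must exploit (\ref{uoi}) exactly so that the $j$-terms cancel upon pairing with the first discrete inequality, and at the same time ensure that the jump-in-time of $\alpha\partial_t[u]$ at $t=0$ does not generate an uncontrolled residual (this is what forces the compatibility condition to be written with the subgradient inequality structure rather than with an equation). Once the $k=1$ estimate is uniform in $\tau$, the recursion for $k\geq 2$ and the passage to the limit are by now standard in the Rothe framework.
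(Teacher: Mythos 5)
Your proposal follows essentially the same route as the paper's proof: difference consecutive discrete inequalities so that the $j$-terms cancel (the paper tests both with the midpoint $({\bf u}^{i+1}+{\bf u}^i)/2$ and uses convexity of $j$, which is equivalent to your cross-testing), apply $2(a-b)a=a^2+(a-b)^2-b^2$ to the $\alpha$-term, initialize at $k=1$ by pairing the first discrete inequality with the compatibility condition (\ref{uoi}), sum, and pass to the limit via the uniform bounds on the Rothe interpolants. The one place where the paper is more careful than your write-up is the recursion for $k\ge 2$: since the ${\bf V}$-norm contains $\|[\,\cdot\,]\|_{2,\Gamma}$ and is not equivalent to the gradient seminorm, Young's inequality leaves an extra $\tfrac{\sigma_\#}{2}\|\partial_\tau[u^k]\|_{2,\Gamma}^2$ on the right that the left-hand gradient term cannot absorb, so your clean bound $C(d_1^2+d_2^2)$ is not immediate and a discrete Gronwall lemma is needed after summation to reach the final estimate.
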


\section{Proof of Theorem \ref{teo2}}
\label{s2}

\subsection{Discretization in time}
\label{time}

In the following we use similar arguments from the methods
 described in \cite{karel}.
We decompose the time interval $I=[0,T]$ into $m$
subintervals $I_{i,m}=[t_{i,m},t_{i+1,m}]$ of size
$h=T/m,$ $i\in\{0,1,\cdot\cdot\cdot,m-1\},$ $
m\in\mathbb N.$ We define, for all
$i\in\{0,1,\cdot\cdot\cdot,m-1\},$ $u^{i+1}=u(t_{i+1,m
})$ as solutions
given at the following Proposition.

\begin{prop}\label{propuiw}
 Let  $i\in \{0,1,\cdot\cdot\cdot,m-1\}$ be fixed,
 $u^i\in L^2(\Gamma)$, and
\[f^{i+1}=
f(t_{i+1,m})\in (H_\beta)'.\]
Then there exists $u^{i+1}\in H_\beta$ a solution to the problem
\begin{eqnarray}
\int_\Omega\sigma\nabla u^{i+1}\cdot\nabla(v-u^{i+1})dx
+\beta\int_\Gamma \nabla u^{i+1}\cdot\nabla(v-u^{i+1})ds+\nonumber\\
+\int_\Gamma{\alpha\over  h}u^{i+1}\left(v-u^{i+1}\right)ds
+\int_\Gamma\{j(v)-j(u^{i+1})\}ds\geq\nonumber\\
\geq\langle
f^{i+1},v-u^{i+1}\rangle_\Omega+\int_\Gamma{\alpha\over  h}u^i\left(
v-u^{i+1}\right)ds,\quad \forall v\in H_\beta.\label{wui}\end{eqnarray}
\end{prop}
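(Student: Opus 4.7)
The strategy is to recast (\ref{wui}) as an elliptic variational inequality of the second kind on the Hilbert space $H_\beta$ and invoke the classical existence theorem of Br\'ezis for such inequalities. Define the symmetric bilinear form
\[
a(w,v) = \int_\Omega \sigma\,\nabla w\cdot\nabla v\,dx + \beta\int_\Gamma \nabla w\cdot\nabla v\,ds + \frac{1}{h}\int_\Gamma \alpha\,w\,v\,ds,
\]
the linear functional
\[
L(v) = \langle f^{i+1}, v\rangle_\Omega + \frac{1}{h}\int_\Gamma \alpha\,u^i\,v\,ds,
\]
and the convex functional $\Phi(v)=\int_\Gamma j(v)\,ds\in[0,+\infty]$. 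Then (\ref{wui}) reads $a(u^{i+1},v-u^{i+1}) + \Phi(v) - \Phi(u^{i+1}) \geq L(v-u^{i+1})$ for all $v\in H_\beta$, so producing $u^{i+1}$ amounts to checking the standard hypotheses of that theorem, after which uniqueness comes for free from strict convexity.

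Continuity of $a$ on $H_\beta$ is immediate from (\ref{defs})--(\ref{defa}) and the trace inequality $H^1(\Omega)\hookrightarrow L^2(\Gamma)$; continuity of $L$ uses in addition the assumptions $u^i\in L^2(\Gamma)$ and $f^{i+1}\in (H_\beta)'$. The functional $\Phi$ is proper because (\ref{defj}) gives $\Phi(0)=0$, convex by convexity of $j$, and lower semicontinuous on $H_\beta$: indeed the trace into $L^2(\Gamma)$ is continuous and the integral of the nonnegative convex lsc integrand $j$ is lsc with respect to $L^2(\Gamma)$-convergence by Fatou's lemma applied along an a.e.-convergent subsequence.

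The delicate verification is coercivity of $a$ on $H_\beta$, which must account for the geometric configurations described in Section 2. If $\Gamma_1\neq\emptyset$, the Poincar\'e inequality on $H^1_{\Gamma_D}(\Omega)$ yields $a(v,v) \geq \sigma_\#\|\nabla v\|^2_{2,\Omega} + \beta\|\nabla v\|^2_{2,\Gamma} \geq c\,\|v\|^2_{H_\beta}$. If $\Gamma_1=\emptyset$, the Dirichlet trace does not touch $\Omega_1$, and one uses instead the equivalent norm $\|v\|_{2,\Gamma}+\|\nabla v\|_{2,\Omega_1}$ recalled after (\ref{defa}): the $(\alpha/h)$-term, combined with the identification $v_1=v_2$ on $\Gamma$ enforced inside $H_\beta$, supplies the missing $L^2(\Gamma)$ control with a constant depending on $h$ (which is harmless at this stage), so again $a(v,v)\geq c(h)\,\|v\|^2_{H_\beta}$. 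With these ingredients Br\'ezis' theorem produces a unique $u^{i+1}\in H_\beta$ solving (\ref{wui}). I expect the main obstacle to be precisely the coercivity case-analysis tied to whether $\Gamma_1$ is empty or not, whereas continuity and lower semicontinuity are entirely standard.
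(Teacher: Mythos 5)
Your proposal is correct and follows essentially the same route as the paper: both recast (\ref{wui}) as an elliptic variational inequality of the second kind with the bilinear form $a$, the linear functional $L$ and the convex lower semicontinuous functional $\Phi$, and invoke the standard existence theory (the paper cites Zeidler's maximal monotone framework, which for this linear symmetric form coincides with the Br\'ezis-type theorem you use). The only deviation is that your coercivity case analysis on $\Gamma_1$ is unnecessary here: elements of $H_\beta$ are single global functions in $H^1_{\Gamma_D}(\Omega)$ on the connected domain $\Omega$ with $\mathrm{meas}(\Gamma_2)>0$, so the Poincar\'e inequality holds throughout and $a(v,v)\geq\min\{\sigma_\#,1\}\|v\|^2_{H_\beta}$ with an $h$-independent constant without resorting to the $\alpha/h$ term; the jump-induced difficulty you anticipate arises only for the Signorini space $\mathbf{V}$ (Lemma \ref{lpoin}).
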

\begin{proof}
The existence of a solution to (\ref{wui}) is deduced from
the general theory on maximal monotone mappings applied to
elliptic variational inequalities \cite[pp. 874-875, 892-893]{z}.
Indeed, the mapping $A:H_\beta\rightarrow (H_\beta)'$ defined by
$$\langle Au, v\rangle=\int_\Omega \sigma\nabla u\cdot\nabla vdx
+\beta\int_\Gamma\nabla u\cdot\nabla v ds+\int_\Gamma {\alpha\over h}uvds$$
is single-valued, linear and hemicontinuous;
the mapping $\varphi:H_\beta\rightarrow [0,+\infty]$
defined by
$$\varphi(v)=\left\{\begin{array}{ll}
\int_\Gamma j(v)ds,&\mbox{ if }
  j(v)\in L^1(\Gamma)\\
+\infty,&\mbox{ otherwise}
\end{array}\right.$$
is convex, lower semicontinuous and $\varphi\not\equiv +\infty$;
and the coercivity condition is valid
$$\langle Au,u\rangle+\varphi(u)
=\int_\Omega \sigma|\nabla u|^2dx+\beta\int_\Gamma |\nabla u|^2ds
\geq  \min\{\sigma_\#,1\}\|u\|_{H_\beta}^2,$$
under the assumptions (\ref{defs})-(\ref{defj}).
Then, for $b\in ({H_\beta})'$ such that
$$\langle b, v\rangle=-\langle f^{i+1}, v\rangle_\Omega -\int_\Gamma {\alpha\over h}u^ivds,$$
the variational inequality (\ref{wui}) has a unique weak solution
$u=u^{i+1}\in H_\beta.$
\end{proof}
\begin{remark}
Since $u^0=S$ on $\Gamma$ means that $u^0\in L^2(\Gamma)$, then Proposition \ref{propuiw}
guarantees the existence of $u^1\in V$ and consequently $u^1\in L^2(\Gamma)$.
Therefore,  Proposition \ref{propuiw} successively
guarantees the existence of $u^{i+1}\in V$ for every $i=1,\cdots,m-1$.
\end{remark}

\subsection{Existence of a limit $u$}

\begin{prop}\label{tildew}
For all  $i\in \{0,1,\cdots,m-1\}$, the estimate holds
 \begin{equation}\label{dui1w}
\alpha_\#\|u^{i+1}\|^2_{2,\Gamma}\leq\max \{{1\over\sigma_\#},1\}
\|f\|^2_{L^2(0,T;(H_\beta)')}+ \alpha^\#
\|S\|_{2,\Gamma}^2.\end{equation}
Moreover, if
$\{\tilde u_m\}_{m\in\mathbb N}$ is the sequence defined by the step functions
$\tilde u_m:I\rightarrow H_\beta$
$$\tilde u_m (t)=\left\{\begin{array}{ll}
u^{1}&\mbox{ for }t=0\\
 u^{i+1}&\mbox{ in }]t_{i,m},t_{i+1,m}]
\end{array}\right.$$
then there exists $u$ such that
$$\tilde u_m\rightharpoonup u\mbox{ in }L^2(0,T;H_\beta).$$
\end{prop}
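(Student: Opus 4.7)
The plan is a discrete energy estimate followed by weak-compactness extraction. Since $0\in H_\beta$ and $j(0)=0$ by (\ref{defj}), the natural test function is $v=0$ in (\ref{wui}): after discarding the nonnegative term $\int_\Gamma j(u^{i+1})ds$ (again by (\ref{defj})) and invoking (\ref{defs}) to bound $\int_\Omega\sigma|\nabla u^{i+1}|^2dx\geq\sigma_\#\int_\Omega|\nabla u^{i+1}|^2dx$, the left-hand side is bounded below by $\min\{\sigma_\#,1\}\|u^{i+1}\|_{H_\beta}^2+\int_\Gamma\frac{\alpha}{h}(u^{i+1})^2ds$. To handle the cross term on the right I would apply Young's inequality,
\[
\int_\Gamma\frac{\alpha}{h}u^iu^{i+1}ds\leq\int_\Gamma\frac{\alpha}{2h}(u^i)^2ds+\int_\Gamma\frac{\alpha}{2h}(u^{i+1})^2ds,
\]
absorb the $(u^{i+1})^2$ contribution on the left, and multiply through by $2h$.

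\textbf{Telescoping and the pointwise bound.} Summing the resulting inequality from $i=0$ to $k-1$ (with $1\leq k\leq m$), the boundary integrals in $u^i$ and $u^{i+1}$ telescope to leave $\int_\Gamma\alpha(u^k)^2ds-\int_\Gamma\alpha(u^0)^2ds$, and the initial condition $u^0=S$ on $\Gamma$ combined with (\ref{defa}) gives $\int_\Gamma\alpha(u^0)^2ds\leq\alpha^\#\|S\|_{2,\Gamma}^2$. For the right-hand contribution $2\sum_{i=0}^{k-1}h\langle f^{i+1},u^{i+1}\rangle_\Omega$ I would use discrete Cauchy--Schwarz followed by Young with parameter $\min\{\sigma_\#,1\}$, chosen so that the resulting $\|u^{i+1}\|_{H_\beta}^2$ piece is absorbed into the left. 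What survives is
\[
\alpha_\#\|u^k\|_{2,\Gamma}^2+\min\{\sigma_\#,1\}\sum_{i=0}^{k-1}h\|u^{i+1}\|_{H_\beta}^2\leq\frac{1}{\min\{\sigma_\#,1\}}\sum_{i=0}^{k-1}h\|f^{i+1}\|_{(H_\beta)'}^2+\alpha^\#\|S\|_{2,\Gamma}^2,
\]
and the Lipschitz regularity (\ref{deffw}) (which also forces $f\in L^\infty(0,T;(H_\beta)')$) lets me replace $\sum_ih\|f^{i+1}\|_{(H_\beta)'}^2$ by $\|f\|_{L^2(0,T;(H_\beta)')}^2$ up to an error of order $h$, yielding (\ref{dui1w}).

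\textbf{Existence of the weak limit and main obstacle.} The $k$-independent piece of the same inequality provides $\sum_{i=0}^{m-1}h\|u^{i+1}\|_{H_\beta}^2\leq C$ with $C$ independent of $m$, and by construction of the step function this sum equals $\|\tilde u_m\|_{L^2(0,T;H_\beta)}^2$; hence $\{\tilde u_m\}$ is bounded in the reflexive Hilbert space $L^2(0,T;H_\beta)$, so a subsequence converges weakly to some $u$. The main delicacy of the whole argument will be the Riemann-sum control $\sum_{i=0}^{m-1}h\|f^{i+1}\|_{(H_\beta)'}^2\leq\|f\|_{L^2(0,T;(H_\beta)')}^2+O(h)$: this is exactly where (\ref{deffw}) is needed, since without Lipschitz regularity the right-endpoint value $f(t_{i+1,m})$ cannot be majorised by the integral over $[t_{i,m},t_{i+1,m}]$. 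Everything else is routine Young and Cauchy--Schwarz bookkeeping, and identification of $u$ as a solution of (\ref{pbuw}) is deferred to the subsequent limit passage of the Rothe scheme.
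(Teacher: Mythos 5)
Your proposal is correct and follows essentially the same route as the paper: testing (\ref{wui}) with $v=0$, using $j\geq 0$ and $j(0)=0$, absorbing the cross term and the $f$-term by Young's inequality with the constants $\min\{\sigma_\#,1\}$ and $\max\{1/\sigma_\#,1\}$, telescoping the boundary integrals down to $\alpha^\#\|S\|_{2,\Gamma}^2$, and extracting a weak limit from the resulting uniform bound in $L^2(0,T;H_\beta)$. Your extra remark that the Riemann sum $h\sum_k\|f^k\|^2_{(H_\beta)'}$ only matches $\|f\|^2_{L^2(0,T;(H_\beta)')}$ up to an $O(h)$ error controlled by (\ref{deffw}) is in fact a point the paper glosses over, so it is a welcome (if minor) refinement rather than a divergence.
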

\begin{proof}
Choosing $v=0$ as a test function in (\ref{wui}), we get
$$\int_\Omega\sigma|\nabla u^{i+1}|^2dx+\beta\int_\Gamma|\nabla u^{i+1}|^2ds+\int_\Gamma{\alpha\over  h}(u^{i+1})^2ds\leq\langle f^{i+1},
u^{i+1}\rangle_\Omega+\int_\Gamma {\alpha\over h}u^iu^{i+1}ds,$$
 for all  $i\in \{0,1,\cdot\cdot\cdot,m-1\}$.
Then it follows
$$\min\{\sigma_\#,1\} \| u^{i+1}\|^2_{H_\beta}+\int_\Gamma{\alpha\over  h}(u^{i+1})^2ds\leq
\max \{{1\over\sigma_\#},1\}\|f^{i+1}\|^2_{({H_\beta})'}+\int_\Gamma{\alpha\over    h}(u^i)^2ds.$$
Summing on $k=0,...,i$, it follows 
$$
\min\{\sigma_\#,1\} h\sum_{k=0}^i  \| u^{k+1}\|^2_{H_\beta}+\alpha_\#\|u^{i+1}\|^2_{2,\Gamma}
\leq \max \{{1\over\sigma_\#},1\}h\sum_{k=1}^{i+1}\|f^k
\|^2_{({H_\beta})'}+{\alpha^\#} \|S\|_{2,\Gamma}^2.
$$

Consequently, we get (\ref{dui1w}) and, for
$i=m-1$,
 \begin{equation}\label{ui1w}
\min \{\sigma_\#,1\}\|\tilde u_m\|^2_{L^2(0,T;{H_\beta})}\leq\max\{{1\over\sigma_\#},1\}
\|f\|^2_{L^2(0,T;({H_\beta})')}+ \alpha^\#
\|S\|_{2,\Gamma}^2.\end{equation}
Thus we can extract a subsequence, still denoted by $\tilde
u_m,$ weakly convergent to $u\in L^2(0,T;H_\beta).$
\end{proof}
Next, let us study the discrete derivative with respect to $t$ at the time $t=t_{i+1}$:
\[Z^{i+1}:={u^{i+1}-u^{i}\over h}.
\]
\begin{prop}\label{zz}
 Let
 $Z_m:[0,T[\rightarrow L^2(\Omega)$   be defined by
$$Z_m (t)=\left\{\begin{array}{ll}
Z^{1}&\mbox{ for }t=0\\&\\
Z^{i+1}&\mbox{ in }]t_{i,m},t_{i+1,m}]
\end{array}\right.\mbox{ in }\Omega.$$
If the assumptions (\ref{defs})-(\ref{defj}) and (\ref{duow})-(\ref{deffw}) are fulfilled,
then the estimate holds
\begin{equation}\label{ui2w}
\|\tilde u_m\|^2_{L^\infty(0,T;{H_\beta})}+\|Z_m\|^2_{2,\Sigma}\leq
C(\|f\|^2_{L^2(0,T;(H_\beta)')}+\|u^0\|_{H_\beta}^2).
\end{equation}
Hence, we can extract a subsequence, still denoted by $Z_m,$ weakly
convergent to $Z\in L^2(\Sigma).$
\end{prop}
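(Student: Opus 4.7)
The strategy is to derive a telescoping discrete energy estimate by inserting the admissible test $v = u^i \in H_\beta$ into the variational inequality (\ref{wui}) satisfied by $u^{i+1}$, for each $i \in \{0,\dots,m-1\}$ (this is admissible since $u^0 \in H_\beta$ by (\ref{duow}) and Proposition \ref{propuiw} then gives each $u^i \in H_\beta$ inductively). Writing $Z^{i+1} = (u^{i+1}-u^i)/h$ and applying the elementary identity $a(b-a) = \tfrac{1}{2}[b^2 - a^2 - (b-a)^2]$ to the $\sigma$-gradient term, the $\beta$-gradient term, and the $\alpha/h$-boundary term, the inequality rearranges into a one-step energy identity
\begin{equation*}
E(u^{i+1}) + h\int_\Gamma \alpha (Z^{i+1})^2\,ds + D_i \leq E(u^i) + \langle f^{i+1},\, u^{i+1}-u^i\rangle_\Omega,
\end{equation*}
where $E(v) = \tfrac12\int_\Omega\sigma|\nabla v|^2 dx + \tfrac\beta2\int_\Gamma |\nabla v|^2 ds + \int_\Gamma j(v)\,ds$ and $D_i \geq 0$ collects the quadratic dissipation coming from $|\nabla(u^{i+1}-u^i)|^2$ on $\Omega$ and on $\Gamma$.

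Summing this one-step estimate from $i = 0$ up to any $N-1 \leq m-1$ collapses the telescope: on the left-hand side one is left with $E(u^N) + \sum_{i=0}^{N-1}\!\bigl(h\int_\Gamma\alpha(Z^{i+1})^2 ds + D_i\bigr)$, and on the right with $E(u^0)$ plus the driving sum $\sum_{i=0}^{N-1}\langle f^{i+1}, u^{i+1}-u^i\rangle_\Omega$. The initial energy $E(u^0)$ is controlled by $\|u^0\|_{H_\beta}^2 + \int_\Gamma j(S)\,ds$, the second piece being bounded via (\ref{js2}) and the trace inequality.

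The driving sum is then handled by Abel summation (discrete integration by parts):
\begin{equation*}
\sum_{i=0}^{N-1}\langle f^{i+1}, u^{i+1}-u^i\rangle_\Omega = \langle f^N, u^N\rangle_\Omega - \langle f^1, u^0\rangle_\Omega - \sum_{i=1}^{N-1}\langle f^{i+1}-f^i, u^i\rangle_\Omega.
\end{equation*}
The Lipschitz hypothesis (\ref{deffw}) gives $\|f^{i+1}-f^i\|_{(H_\beta)'}\leq dh$, so the last sum is controlled by $dT\max_{0\le i\le N}\|u^i\|_{H_\beta}$. Combined with Young's inequality on the two boundary terms of the Abel formula, one obtains a quantity whose $\|u^N\|_{H_\beta}$ contribution can be absorbed into the coercive part $\tfrac12\min(\sigma_\#,1)\|u^N\|_{H_\beta}^2$ already sitting inside $E(u^N)$. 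Taking the supremum over $N$ on both sides then absorbs the residual $\max_i\|u^i\|_{H_\beta}$ term a second time, at the cost of constants depending on $T$, $d$, $\sigma_\#$ and the data norms.

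Since the resulting bound is uniform in $N$, this yields at once the $L^\infty(0,T;H_\beta)$ estimate on $\tilde u_m$; as $\alpha \geq \alpha_\# > 0$, the dissipation $\sum_{i=0}^{m-1}h\int_\Gamma\alpha(Z^{i+1})^2 ds = \int_0^T\!\int_\Gamma\alpha Z_m^2\,ds\,dt$ simultaneously delivers the $L^2(\Sigma)$ bound on $Z_m$, and weak sequential compactness of bounded sets in the Hilbert space $L^2(\Sigma)$ produces the subsequence converging weakly to some $Z \in L^2(\Sigma)$. The main technical obstacle is the double absorption in the Abel summation step: the Lipschitz defect $f^{i+1}-f^i$ unavoidably produces $\max_i\|u^i\|_{H_\beta}$ on the right, which disappears only after taking the supremum on both sides and absorbing once more with a small Young parameter; keeping track of the resulting $T$- and $d$-dependent constants is routine but must be done carefully so as not to degrade the $L^2$-in-time norm of $f$ announced in (\ref{ui2w}).
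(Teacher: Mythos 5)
Your proposal is correct and follows essentially the same route as the paper: test (\ref{wui}) with $v=u^i$, telescope the gradient terms via $2a(a-b)=a^2+(a-b)^2-b^2$, Abel-sum the forcing term, and invoke the Lipschitz bound (\ref{deffw}) on $f^{k+1}-f^k$. The only (harmless) deviation is at the final step: the paper controls $dh\sum_k\|u^k\|_{H_\beta}$ by the already-established $L^2(0,T;H_\beta)$ bound (\ref{ui1w}) from Proposition \ref{tildew}, whereas you close the estimate self-containedly with a second absorption after taking the supremum over the time index.
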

\begin{proof}
 For a fixed $t$, there exists
$i\in\{0,\cdot\cdot\cdot,m-1\}$ such that  $t\in]t_{i,m};t_{i+1,m}]$.
Choosing $v=u^i$ as a test function in (\ref{wui}), we have
\begin{eqnarray*}
\int_\Omega\sigma\nabla u^{i+1}\cdot\nabla(u^{i+1}-u^i)dx
+\beta\int_\Gamma \nabla u^{i+1}\cdot\nabla(u^{i+1}-u^i)ds+\\
+\int_\Gamma{\alpha\over  h}(u^{i+1}-u^i)^2ds 
+\int_\Gamma j(u^{i+1})ds\leq
 \int_\Gamma j(u^i)ds+\langle f^{i+1},u^{i+1}-u^i\rangle_\Omega.
\end{eqnarray*}
In order to sum the above expression  on $k=0,...,i$,
 consider the relation $2(a-b)a=a^2+(a-b)^2-b^2$ to obtain
\begin{eqnarray*}
\sum_{k=0}^i\int_\Omega\sigma\nabla u^{k+1}\cdot\nabla(u^{k+1}-u^k)dx&=&
{1\over 2}\int_\Omega\sigma|\nabla u^{i+1}|^2dx-{1\over 2}\int_\Omega\sigma
|\nabla u^0|^2dx+\\&+&{1\over 2}\sum_{k=0}^i
\int_\Omega\sigma|\nabla(u^{k+1}-u^k)|^2dx;\\
\sum_{k=0}^i\int_\Gamma\nabla u^{k+1}\cdot\nabla(u^{k+1}-u^k)ds&=&
{1\over 2}\int_\Gamma|\nabla u^{i+1}|^2ds-{1\over 2}\int_\Gamma
|\nabla u^0|^2ds+\\&+&{1\over 2}\sum_{k=0}^i
\int_\Gamma|\nabla(u^{k+1}-u^k)|^2ds.
\end{eqnarray*}

Now, using the assumptions (\ref{defs})-(\ref{defj})  we find
\begin{eqnarray}
{\min \{\sigma_\#,1\}\over 2}\|u^{i+1}\|^2_{H_\beta}+
\alpha_\#\sum_{k=0}^i h\int_\Gamma\left({u^{k+1}-u^k\over
h}\right)^2ds\leq {\sigma^\#
\over 2}\|\nabla u^0\|_{2,\Omega}^2+\nonumber\\+{\beta
\over 2}\|\nabla u^0\|_{2,\Gamma}^2+
\int_\Gamma j(S)ds
-\langle f^1,u^0\rangle_\Omega-\sum_{k=1}^i\langle f^{k+1}-f^k,u^k\rangle_\Omega+\langle
f^{i+1},u^{i+1}\rangle_\Omega.\label{dtuiw}
\end{eqnarray}

 By (\ref{deffw}) it follows
 \begin{eqnarray*}
 \sum_{k=1}^i\langle f^{k+1}-f^k,u^k\rangle_\Omega\leq d h
 \sum_{k=1}^i\|u^k\|_{H_\beta}.
 \end{eqnarray*}
Therefore, inserting the above inequality in (\ref{dtuiw}) and applying (\ref{ui1w}), it results
(\ref{ui2w}).
\end{proof}

From the Rothe function defined by
$$u_1(x,t)=u^0(x)+t\ {u^{1}(x)-u^0(x)\over h}\mbox{ in } I_{0,1}=I,$$
consider the following definition.
\begin{definition} \label{defro}
We say that  $\{u_m\}_{m\in\mathbb N}$ is the Rothe sequence if
$$u_m(x,t)=u^{i}(x)+(t-t_{i,m}){u^{i+1}(x)-u^{i}(x)\over
  h{}}\mbox{ in } I_{i,m},$$
for all  $i\in \{0,1,\cdots,m-1\}$.
  \end{definition} 
  
\begin{prop}\label{duuw}
If $Z$ satisfies Proposition \ref{zz}, then
$$\partial_tu=Z\mbox{ in }L^2(\Gamma),\mbox{ for almost all }t\in
I.$$
\end{prop}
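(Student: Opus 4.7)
The plan is to identify $\partial_t u_m$ with $Z_m$ on each subinterval, verify that the Rothe sequence and the step-function sequence have the same weak limit on $\Sigma$, and then pass to the limit in the distributional definition of the time derivative.

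First I would note that, by Definition \ref{defro}, $u_m$ is piecewise affine in $t$ on each $I_{i,m}$ with slope $(u^{i+1}-u^i)/h = Z^{i+1}$. Consequently $\partial_t u_m = Z_m$ almost everywhere on $I$, viewed as an identity of functions with values in $L^2(\Gamma)$ (after taking trace).

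The next step is to compare $u_m$ with $\tilde u_m$ on $\Sigma$. On $I_{i,m}$ one has
\[
u_m(t)-\tilde u_m(t)=u^i-u^{i+1}+(t-t_{i,m})Z^{i+1}=-(t_{i+1,m}-t)Z^{i+1},
\]
so that $|u_m-\tilde u_m|\leq h\,|Z_m|$ pointwise on $\Sigma$. Integrating yields
\[
\|u_m-\tilde u_m\|_{2,\Sigma}^2\leq h^2\|Z_m\|_{2,\Sigma}^2,
\]
and since $\{Z_m\}$ is bounded in $L^2(\Sigma)$ by Proposition \ref{zz}, the right-hand side tends to $0$ as $m\to\infty$. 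Combining this with the fact that $\tilde u_m\rightharpoonup u$ in $L^2(0,T;H_\beta)$ (hence $\tilde u_m\rightharpoonup u$ in $L^2(\Sigma)$ by continuity of the trace), I obtain that $u_m\rightharpoonup u$ in $L^2(\Sigma)$ as well.

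Finally, I would pass to the limit in the distributional time derivative. For any $\varphi\in C^\infty_c(0,T)$ and $\psi\in L^2(\Gamma)$, integration by parts on each $I_{i,m}$ and summation give
\[
\int_0^T\int_\Gamma Z_m\,\psi\,\varphi\,ds\,dt=\int_0^T\int_\Gamma \partial_t u_m\,\psi\,\varphi\,ds\,dt=-\int_0^T\int_\Gamma u_m\,\psi\,\partial_t\varphi\,ds\,dt.
\]
Using $Z_m\rightharpoonup Z$ in $L^2(\Sigma)$ from Proposition \ref{zz} on the left, and $u_m\rightharpoonup u$ in $L^2(\Sigma)$ from the previous step on the right, I conclude
\[
\int_0^T\int_\Gamma Z\,\psi\,\varphi\,ds\,dt=-\int_0^T\int_\Gamma u\,\psi\,\partial_t\varphi\,ds\,dt,
\]
which is the definition of $\partial_t u=Z$ in $L^2(\Sigma)$, i.e.\ $\partial_t u(\cdot,t)=Z(\cdot,t)$ in $L^2(\Gamma)$ for almost every $t\in I$.

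The only subtle point is the identification of the weak limits of $u_m$ and $\tilde u_m$ on $\Sigma$; the routine estimate $\|u_m-\tilde u_m\|_{2,\Sigma}\leq h\|Z_m\|_{2,\Sigma}$ is exactly what makes this work, and it crucially relies on the uniform bound on $Z_m$ established earlier.
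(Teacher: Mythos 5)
Your proof is correct, but it organizes the limit passage differently from the paper. The paper works with the antiderivative identity $u_m(t)-S=\int_0^t Z_m(\tau)\,d\tau$ on $\Gamma$, introduces $w(t)=\int_0^t Z(\tau)\,d\tau\in C([0,T];L^2(\Gamma))$, shows $(u_m(t)-S-w(t),v)\to 0$ for each fixed $t$ and $v\in L^2(\Gamma)$, and then upgrades this via the uniform $L^\infty(0,T;L^2(\Gamma))$ bound, dominated convergence, and density of piecewise-constant test functions, concluding $u(t)-S=\int_0^t Z(\tau)\,d\tau$ by uniqueness of the weak limit. You instead test $\partial_t u_m=Z_m$ directly against $\varphi\otimes\psi$ with $\varphi\in C_c^\infty(0,T)$ and pass to the limit using $Z_m\rightharpoonup Z$ and $u_m\rightharpoonup u$ in $L^2(\Sigma)$; this is cleaner (no dominated convergence, no density of step functions in time) and your explicit estimate $\|u_m-\tilde u_m\|_{2,\Sigma}\le h\|Z_m\|_{2,\Sigma}$ supplies the identification of the weak limit of $u_m$ with $u$, a step the paper invokes only implicitly under the phrase ``uniqueness of the weak limit'' (and which otherwise only appears in Section \ref{passw}). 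What your route does not deliver is the extra information contained in the paper's conclusion $u(t)-S=\int_0^t Z(\tau)\,d\tau$: absolute continuity of $t\mapsto u(t)$ in $L^2(\Gamma)$ and attainment of the initial condition $u(\cdot,0)=S$, which is part of what Definition \ref{dwt} requires of the solution; since compactly supported $\varphi$ lose the endpoint $t=0$, you would need one further remark (e.g.\ that $u-S$ and $\int_0^\cdot Z$ have the same distributional derivative and agree at $t=0$ as limits of $u_m(0)=u^0$) to recover (\ref{ci}). For the proposition as literally stated, your argument is complete.
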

\begin{proof}  For a fixed $t$, there exists
$i\in\{0,\cdot\cdot\cdot,m-1\}$ such that  $t\in]t_{i,m};t_{i+1,m}]$.
Thus we
obtain
$$\int^t_0 Z_m(\tau)d\tau=\sum_{k=0}^{i-1}\int_{kh}^{(k+1)h}
{u^{k+1}-u^k\over h}d\tau+\int_{ih}^{t}
{u^{i+1}-u^i\over h}d\tau\mbox{ in }\Omega.$$
Because  there exists $w\in C([0,T];{L^2(\Gamma))}$ such that
$$(w(t),v)=\int^t_0(Z(\tau),v)d\tau,\qquad\forall v\in L^2(\Gamma),$$
let us consider Definition \ref{defro} 
on $\Gamma.$ Thus we have
$\int^t_0 Z_m(\tau)d\tau=u_m(t)-S$
and from the Riesz theorem we get
$$(u_m(t)-S
,v)=\int^t_0(Z_m(\tau),v)d\tau,\quad\forall v\in L^2(\Gamma).$$
Indeed, the right hand side of the above equation is
a bounded linear functional in $L^2(\Gamma)$, representable thus
 (uniquely) by the element $u_m(t)-S$ from $L^2(\Gamma).$

Then it follows
\begin{equation}\label{leb}
\lim_{m\rightarrow+\infty}\left(u_m(t)-S-w(t),v\right)=
\lim_{m\rightarrow+\infty}\int^t_0(Z_m(\tau)-Z(\tau),v)d\tau=0.
\end{equation}

Let us prove that
 the norms of the  functions $u_m$ are uniformly bounded with
respect to $t\in I$ and $m$. From  the estimates
(\ref{dui1w}) independent on $i$ and $m$, and
considering 
$$\|u_m(t)\|_{2,\Gamma }=\|u^{i}\left(1+{t-t_{i,m}\over
  h{}}\right)+u^{i+1}{t-t_{i,m}\over h{}}\|_{2,\Gamma}$$
then, we get
$$\|u_m \|^2_{L^\infty(0,T;L^2(\Gamma))}\leq
C(\|f\|^2_{L^2(0,T;(H_\beta)')}+\|S\|^2_{2,\Gamma}).$$

Hence,
 the Lebesgue  Dominated Convergence Theorem can be applied in (\ref{leb}) giving
$$\lim_{m\rightarrow+\infty} \int^T_0(u_m(t)-S-w(t),v)dt=0,\qquad\forall v\in L^2(\Gamma).$$

In the same manner this result can be derived for the case when $v(t)$
is a piecewise constant function of $t \in I.$
Since these functions are dense in $L^2(\Sigma),$ it remains valid for
every function $v\in L^2(\Sigma).$ From the uniqueness of the weak
limit, we conclude
$$u(t)-S=\int^t_0 Z(\tau)d\tau,$$
which corresponds to the claim.
\end{proof}

\subsection{Passage to the limit on $m\rightarrow +\infty$}
\label{passw}

Denoting $f_m(t)=f^{i+1}$  for
$t\in ]t_{i,m},t_{i+1,m}]$ and $i\in\{0,\cdot\cdot\cdot,m-1\}$, we have
\begin{eqnarray*}
\int_Q\sigma\nabla\tilde  u_m\cdot\nabla vdxdt+\beta\int_\Sigma\nabla\tilde  u_m\cdot\nabla vdsdt+
\int_\Sigma\alpha Z_mvdsdt+\\+\int_\Sigma j(v)dsdt
\geq\int_Q
\sigma|\nabla\tilde  u_m|^2dxdt+\beta\int_\Sigma|\nabla\tilde  u_m|^2dsdt+\\
+\int_\Sigma\alpha Z_m \tilde
u_mdsdt+\int_\Sigma j(\tilde  u_m)dsdt+\int_0^T\langle f_m,v-\tilde  u_m\rangle_\Omega dt.
\end{eqnarray*}

From Propositions \ref{tildew} and \ref{zz}
to pass to the limit the above inequality
and recalling the weak lower semicontinuity property for the first and second 
terms on the right hand side of the above inequality,
it remains to prove that
$$\tilde u_m\rightarrow u\mbox{ in }L^2(\Sigma).$$

Taking $\tilde u_m-u=\tilde u_m-u_m+u_m-u$
first let us prove that
$$\tilde u_m-u_m\rightarrow 0\mbox{ in }L^2(\Sigma).$$
Since we have $0<t-t_{i,m}\leq h\mbox{ in }]t_{i,m};t_{i+1,m}]$ we
obtain
$$\|\tilde u_m(t)-u_m(t)\|_{2,\Gamma}=\|Z_m\|_{2,\Gamma}(h-(t-t_{i,m}))<
h\|Z_m\|_{2,\Gamma}$$
and from (\ref{ui2w}) then it follows
$$\|\tilde u_m-u_m\|_{2,\Sigma}\leq{CT\over
  {m}}(\|f\|^2_{L^2(0,T;(H_\beta)')}+\|u^0\|_{H_\beta}^2)^{1/2}\rightarrow
  0.$$

Secondly the Rothe sequence $\{u_m\}$ is bounded in $L^2(0,T;H_\beta)$,
and, from Proposition \ref{duuw},
the functions $\partial_tu_m$ are bounded in
$L^2(\Sigma)$ then, for a
subsequence still denoted by $u_m,$ the strong convergence holds
$$u_m\rightarrow u\mbox{ in } L^2(\Sigma).$$

Then it results
$$\int^T_0\int_\Gamma Z_m\tilde
u_mdsdt\rightarrow\int^T_0\int_\Gamma Uudsdt=
\int^T_0\int_\Gamma\partial_tuudsdt.$$

Therefore we are in the conditions to pass to the limit 
concluding the weak formulation (\ref{pbuw}).

From the standard technique to prove uniqueness of solution, the
solution $u$ to (\ref{pbuw}) with (\ref{cf})
 is unique. Then the whole sequence
$\{\tilde u_m\}$ converges *-weakly to $u\in L^\infty(0,T;H_\beta).$

\section{Regularity in time}
\label{reg2}

{\sc Proof of Theorem \ref{prop2}.}  The proof follows the time discretization argument
as in Theorem  \ref{teo2},
considering the existence of the integral inequality (\ref{wui}).
Choosing
$v=(u^{i+1}+u^i)/2$ as a test function in (\ref{wui}) for the solutions 
$u^{i+1}$ and $u^i$, summing the consecutive integral inequalities,
 and dividing by $h$,  we deduce
\begin{eqnarray*}
\int_\Omega h\sigma|\nabla Z^{i+1}|^2dx+h\beta\int_\Gamma|\nabla Z^{i+1}|^2ds
+\int_\Gamma\alpha(Z^{i+1}-Z^i)Z^{i+1}ds\leq\\ \leq
\langle f^{i+1}-f^i,Z^{i+1}\rangle_\Omega\end{eqnarray*}
taking the convexity of $j$ into account.
Applying the assumptions (\ref{defs}) and (\ref{deffw}),
it results
$$\min\{\sigma_\#,1\} h\| Z^{i+1}\|^2_{H_\beta}+\int_\Gamma\alpha(Z^{i+1}-Z^i)Z^{i+1}ds\leq
d h\|Z^{i+1}\|_{H_\beta}.$$

Considering the relation
$2(a-b)a=a^2+(a-b)^2-b^2$, to $a=Z^{i+1}$ and $b=Z^i,$
and summing on $k=1,\cdots,i$ ($i\in\{1,\cdots,m-1\}$) we obtain
\begin{eqnarray*}
\min\{\sigma_\#,1\}\sum^{i}_{k=1}h\|  Z^{k+1}\|^2_{H_\beta}+\alpha_\#\|  Z^{i+1}\|^2_{2,\Gamma}\leq
2\int_\Gamma\alpha\left({u^1-S\over  h}\right)^2ds+\\+d^2
\max\{{1\over \sigma_\#},1\}\sum^{i}_{k=0}h.
\end{eqnarray*}
Notice that $mh=T$.

Let us determine the estimate for the first term on the right hand
side of the above inequality.
Rewrite the integral inequality (\ref{wui}) for $i=0$ in the form
\begin{eqnarray*}
\int_\Omega\sigma\nabla(u^1-u^0)\cdot\nabla (v-u^1)dx+
\int_\Omega\sigma\nabla u^0\cdot\nabla (v-u^1)dx+\\+
\beta\int_\Gamma \nabla(u^1-u^0)\cdot\nabla (v-u^1)ds+
\beta\int_\Gamma \nabla u^0\cdot\nabla (v-u^1)ds+\\
+\int_\Gamma\alpha{u^1-S\over  h}(v-u^1)ds
  +\int_\Gamma\{j(v)-j(u^1)\}ds
\geq\langle f^1-f(0),  v-u^1\rangle_\Omega+\\+
\langle f(0
) ,v-u^1\rangle_\Omega,\end{eqnarray*}
for all $v\in V$, and in particular $v=u^0$.
Thus, we apply the assumption (\ref{wuoi}) with $v=u^1$ and divide by $h$ we deduce
\begin{eqnarray*}{\sigma_\#\over 2 h}\int_\Omega|\nabla(u^1-u^0)|^2dx+{\beta\over
 2 h}\int_\Gamma|\nabla(u^1-u^0)|^2ds+\int_\Gamma\alpha\left({u^1-S\over h}
\right)^2ds\leq\\ \leq
{C\over 2 h}\|f^1-f(0)\|_{(H_\beta)'}^2.
\end{eqnarray*}
 Then, using (\ref{deffw}), we have
$$\int_\Gamma{\alpha}
\left|{u^1-S\over h}\right|^2ds\leq Chd^2<C.$$

Since the above regularity estimates are independent on $m$
 the proof of the passage to the limit is similar to the one of Section \ref{s2}.
Moreover, the uniqueness of the weak solution implies that
 the weak solution is the strong solution in the sense
$ u\in C([0,T];H_\beta)$ by appealing to the Aubin-Lions Theorem.

\section{Proof of Proposition \ref{teo3}}
 \label{ssasymp}
 
\subsection{Existence of $u_\varepsilon$}
\label{exue}

The time discretization described in Section \ref{time} reads, for the perturbed problem, as
\begin{eqnarray}
\int_{\Omega_\varepsilon }\sigma_\varepsilon \nabla u^{i+1}\cdot\nabla(v-u^{i+1})dx
+\int_{S_\varepsilon }{\alpha\over \varepsilon  h\gamma}(
u^{i+1}-u^i)\left(v-u^{i+1}\right)dx+\nonumber\\
+\int_{S_\varepsilon }{1\over\varepsilon \gamma}\{j(v)-j(u^{i+1})\}dx\geq\langle
f^{i+1},v-u^{i+1}\rangle_{\Omega_\varepsilon },\quad \forall v\in X_\varepsilon .\label{tthin}
\end{eqnarray}

 The existence and uniqueness of 
a solution $u^{i+1}_\varepsilon\equiv u^{i+1}\in X_\varepsilon $
is due to standard results for elliptic variational
inequalities as in the proof of Proposition \ref{propuiw}
(cf. \cite{l}). Indeed, the
bilinear symmetric form
$$a(u,v)=\int_{\Omega_\varepsilon }\sigma_\varepsilon \nabla u\cdot \nabla
vdx+\int_{S_\varepsilon }{\alpha\over \varepsilon  h\gamma}  uvdx$$
is coercive in the following sense
$$a(u,u)\geq \min\{1,\sigma_\#\}\|\nabla u\|^2_{2,\Omega_\varepsilon }
+{\alpha_\#\over  \varepsilon  h \gamma^\#}\|u\|^2_{2,S_\varepsilon }.$$

Now taking first $v=0$ in (\ref{tthin}),
analogously to the proof of Proposition \ref{tildew},
 we get the estimates
\begin{eqnarray}
{\alpha_\#\over \varepsilon\gamma^\#}\|  u^{i+1}\|^2_{2,S_\varepsilon }\leq 
{\alpha^\#\over\varepsilon \gamma_\#}
\|u^0\|^2_{2,S_\varepsilon }+\max \{{1\over\sigma_\#},1\}
\|f_\varepsilon\|^2_{L^2(0,T;(X_\varepsilon)')};\nonumber\\\label{cotat1}
\min\{\sigma_\#,1\}\int^T_0\|\tilde u_{m}\|^2_{X_\varepsilon }dt\leq 
{\alpha^\#\over \varepsilon  \gamma_\#}
\|u^0\|^2_{2,S_\varepsilon }+\max \{{1\over\sigma_\#},1\}
\|f_\varepsilon\|^2_{L^2(0,T;(X_\varepsilon)')}.
\end{eqnarray}

Next taking $v=u^i$ in (\ref{tthin}) and arguing as  the proof of Proposition \ref{zz},
we obtain
\begin{eqnarray*}\min\{1,\sigma_\#\}
\|\nabla u^{i+1}\|^2_{2,\Omega_\varepsilon }+{\alpha_\#h\over
  \varepsilon  \gamma^\#}\sum_{k=0}^i\|Z^{k+1}\|^2_{2,S_\varepsilon }\leq\\
\leq\int_{S_\varepsilon }{1\over\varepsilon \gamma_\#}
j(u^0)dx+C(\|\nabla u^0\|^2_{2,\Omega_\varepsilon }
+
\|f_\varepsilon\|^2_{L^2(0,T;(X_\varepsilon)')}+{1\over \varepsilon  }
\|u^0\|^2_{2,S_\varepsilon }).
\end{eqnarray*}
Thus applying (\ref{js2}) it results that $\tilde u_m$ and $Z_m$ are uniformly
bounded in  $ L^\infty(0,T; X_\varepsilon)$ and  $ L^2(S_\varepsilon\times ]0,T[)$, respectively.
Therefore the existence of a solution $u\in L^2(0,T; X_\varepsilon)$ to
(\ref{ueps}) can be done by similar arguments of passage to the limit as in the
 proof of Theorem \ref{teo2} (cf. Section \ref{passw}).

\subsection{Passage to the limit on $\varepsilon$}

In order to let $\varepsilon \rightarrow 0$, 
 we utilize the following equivalent
variational inequalities to (\ref{ueps})
 and (\ref{pbuw}) with $\beta=0$, respectively, 
\begin{eqnarray}
\int^T_0\int_{\Omega_\varepsilon }\sigma_\varepsilon \nabla
u_\varepsilon \cdot\nabla(v-u_\varepsilon )dxdt+\int^T_0\int_{S_\varepsilon }{\alpha\over
\varepsilon \gamma}{\partial_t}v(v-u_\varepsilon )dxdt+\nonumber\\
+\int_{S_\varepsilon }{\alpha\over 2\varepsilon   \gamma}|v(0)-u^0|^2dx
+\int^T_0\int_{S_\varepsilon }{1\over\varepsilon \gamma}
\{j(v)-j(u_\varepsilon )\}dxdt\geq\nonumber\\
\geq\int^T_0\langle
f_\varepsilon,v-u_\varepsilon\rangle_{\Omega_\varepsilon }dt,
\quad\forall v\in{\mathcal
X}_\varepsilon :=L^2(0,T;X_\varepsilon )\cap H^1(0,T;H^1(S_\varepsilon ));\label{wteo4}
\end{eqnarray}
and
\begin{eqnarray*}
\int^T_0\int_\Omega \sigma\nabla
u\cdot\nabla(v-u)dxdt+\int^T_0\int_\Gamma\alpha\partial_t
v(v-u)dsdt+\int_\Gamma{\alpha\over 2} |v(0)-u^0|^2ds+\nonumber\\
+\int^T_0\int_\Gamma\{j(v)-j(u)\}dsdt\geq\int^T_0\langle
f,v-u\rangle_\Omega dt,\quad\forall v\in{\mathcal X}.\qquad
\end{eqnarray*}

Let $u_\varepsilon $ be the solution of
(\ref{ueps}), or equivalently (\ref{wteo4}), satisfying  (\ref{cis}).
By appealing to Section \ref{exue} we have
$$\|u_\varepsilon \|_{L^\infty(0,T;L^2(S_\varepsilon ))}\leq C(\|
u^0\|_{2,\Omega}+ \|f\|_{L^2(0,T;H')}).$$

Using the result (cf. \cite{cr89})
$${1\over\varepsilon }\|u^0\|^2_{2,S_\varepsilon }\leq C( 
\|u^0\|^2_{2,\Gamma}+\varepsilon \|\nabla u^0\|^2_{2,S_\varepsilon })$$
in the estimate (\ref{cotat1}) it follows
$$\|u_\varepsilon \|_{L^2(0,T;H^1(\Omega_\varepsilon ))}\leq C(\|
u^0\|_{H}+ \|f\|_{L^2(0,T;H')}).$$

Thus there exists a subsequence $\varepsilon \rightarrow 0$ and a function
$u\in L^\infty(0,T;L^2(S_\varepsilon ))\cap  L^2(0,T;H^1(\Omega_\varepsilon ))$ such that
\begin{eqnarray}\label{linf}
u_\varepsilon \rightharpoonup u\quad\mbox{ *-weakly in }
L^\infty(0,T;L^2(S_\varepsilon ));\\
u_\varepsilon \rightharpoonup u\quad\mbox{ weakly in }
L^2(0,T;H^1(\Omega_\varepsilon )).
\end{eqnarray}

Next  we recall the following lemma
which is an extension the one proved in \cite{cr89,cr90}.
\begin{lemma}\label{ljf}
${\bf a)}$ For any function $w\in W^{1,1}(\Omega\setminus \overline{\Omega_1})$
we have 
$$\int_{S_\varepsilon }{w\over \varepsilon \gamma}dx\rightarrow\int_\Gamma wds\quad
\mbox{ as
  }\varepsilon \rightarrow 0.$$
${\bf b)}$ For any sequence of functions $w_\varepsilon \in
L^1((\Omega\setminus\overline{\Omega_1})\times]0,T[)$ and any $w\in
L^1(\Gamma\times]0,T[)$ such that
$$\|\nabla w_\varepsilon \|_{q,S_\varepsilon }\leq C\quad\mbox{ and
  }\quad\int^T_0\int_\Gamma (w_\varepsilon -w)dsdt\rightarrow 0,$$
for some constant $C>0$ and some exponent $q>1,$ we have
$$\int^T_0\int_{S_\varepsilon }{w_\varepsilon \over
  \varepsilon \gamma}dxdt\rightarrow\int_0^T\int_\Gamma wdsdt\qquad\mbox{ as
  }\varepsilon \rightarrow 0.$$\end{lemma}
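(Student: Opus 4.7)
\textbf{Proof plan for Lemma \ref{ljf}.} The core device in both parts is the tubular-neighbourhood change of variables $\Phi(\xi,\tau)=\xi+\tau\mathbf{n}(\xi)$, a bi-Lipschitz bijection from $\{(\xi,\tau):\xi\in\Gamma,\ 0<\tau<\varepsilon\gamma(\xi)\}$ onto $S_\varepsilon$ for $\varepsilon$ small enough that $\overline{S_\varepsilon}\subset\Omega$. Because $\Gamma$ is Lipschitz and $\gamma\in C^{0,1}(\Gamma)$ with $\gamma\geq\gamma_\#>0$, the Jacobian $J(\xi,\tau)$ is well-defined almost everywhere and satisfies $J(\xi,0)=1$ together with $|J(\xi,\tau)-1|\leq C\tau$ uniformly in $\xi$. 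Every integral over $S_\varepsilon$ thereby factors through $\Gamma\times(0,\varepsilon\gamma(\xi))$, after which the behaviour in the normal direction is controlled by the fundamental theorem of calculus along normals.

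For part (a), the change of variables yields
\[
\int_{S_\varepsilon}\frac{w}{\varepsilon\gamma}\,dx = \int_\Gamma\frac{1}{\varepsilon\gamma(\xi)}\int_0^{\varepsilon\gamma(\xi)} w(\xi+\tau\mathbf{n}(\xi))\,J(\xi,\tau)\,d\tau\,d\xi.
\]
Since $w\in W^{1,1}(\Omega\setminus\overline{\Omega_1})$ admits an $L^1(\Gamma)$ trace, one may write $w(\xi+\tau\mathbf{n})=w(\xi)+\int_0^\tau\partial_\tau w(\xi+\sigma\mathbf{n})\,d\sigma$ for a.e.~$\xi\in\Gamma$. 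Substituting splits the integral into a leading piece that converges to $\int_\Gamma w\,ds$ (using $J\to 1$ uniformly together with dominated convergence) and a remainder bounded by $C\varepsilon\|\nabla w\|_{L^1(S_\varepsilon)}$, which vanishes as $\varepsilon\to 0$.

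Part (b) follows the same scheme, with H\"older's inequality now controlling the remainder. After the analogous decomposition one has
\[
\int_0^T\!\!\int_{S_\varepsilon}\frac{w_\varepsilon}{\varepsilon\gamma}\,dx\,dt = \int_0^T\!\!\int_\Gamma w_\varepsilon(\xi)K_\varepsilon(\xi)\,d\xi\,dt + R_\varepsilon,
\]
where $K_\varepsilon(\xi):=(\varepsilon\gamma(\xi))^{-1}\int_0^{\varepsilon\gamma(\xi)}J(\xi,\tau)\,d\tau\to 1$ uniformly in $\xi$, so the hypothesis $\int_0^T\int_\Gamma(w_\varepsilon-w)\,ds\,dt\to 0$ transfers to the leading term (after a routine absorption of $K_\varepsilon-1$ using the uniform $L^1$-control of the traces of $w_\varepsilon$ supplied by the gradient bound). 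Applying H\"older with exponents $q$ and $q/(q-1)$ to the inner integral $\int_0^\tau|\partial_\tau w_\varepsilon|\,d\sigma$ and then reversing the change of variables yields
\[
|R_\varepsilon|\leq C\int_0^T\!\!\int_{S_\varepsilon}|\nabla w_\varepsilon|\,dx\,dt \leq C\,\varepsilon^{1-1/q}\|\nabla w_\varepsilon\|_{L^q(S_\varepsilon\times]0,T[)}\longrightarrow 0
\]
because $|S_\varepsilon|\sim\varepsilon$ and, decisively, $q>1$.

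The main technical issue is to handle the change of variables carefully on the Lipschitz hypersurface $\Gamma$, where $\mathbf{n}$ is only almost-everywhere defined; this is classical for tubular neighbourhoods of Lipschitz manifolds and uniform control in $\xi$ is ensured by the two-sided bound $\gamma_\#\leq\gamma\leq\gamma^\#$. The argument is exactly the extension of \cite{cr89,cr90} from constant layer thickness $\varepsilon$ to the variable thickness $\varepsilon\gamma(\xi)$; the extra factor $1/\gamma$ is absorbed cleanly because $\gamma$ is Lipschitz and bounded away from zero and infinity.
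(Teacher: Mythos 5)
The paper does not actually prove this lemma: it is stated as ``an extension of the one proved in \cite{cr89,cr90}'' and invoked without argument, so there is no in-paper proof to compare against. Your reconstruction via the tubular change of variables $\Phi(\xi,\tau)=\xi+\tau\mathbf{n}(\xi)$, the fundamental theorem of calculus along normals, and H\"older's inequality with exponent $q>1$ is precisely the Colli--Rodrigues scheme, and the architecture is sound; in particular the estimate $|R_\varepsilon|\leq C\,|S_\varepsilon\times\,]0,T[\,|^{1-1/q}\,\|\nabla w_\varepsilon\|_{q,S_\varepsilon}\leq C\varepsilon^{1-1/q}$ is the right way to see why $q>1$ is decisive.

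Two points need repair. First, in part (a) the remainder is bounded by $C\|\nabla w\|_{L^1(S_\varepsilon)}$ with no extra factor of $\varepsilon$: once you bound $\int_0^\tau|\partial_\nu w|\,d\sigma$ by $\int_0^{\varepsilon\gamma}|\partial_\nu w|\,d\sigma$, the average $\tfrac{1}{\varepsilon\gamma}\int_0^{\varepsilon\gamma}(\cdot)\,d\tau$ merely reproduces that quantity. The conclusion survives because $|S_\varepsilon|\to 0$ and the integral of the fixed $L^1$ function $\nabla w$ over a set of vanishing measure tends to zero by absolute continuity --- say that, rather than invoking a spurious $\varepsilon$. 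Second, and more substantively, the ``routine absorption of $K_\varepsilon-1$'' in part (b) requires a uniform bound on $\|w_\varepsilon\|_{L^1(\Gamma\times\,]0,T[\,)}$, and this does not follow from the stated hypotheses: the bound $\|\nabla w_\varepsilon\|_{q,S_\varepsilon}\leq C$ controls oscillation in the normal direction but not the size of the trace, and $\int_0^T\int_\Gamma(w_\varepsilon-w)\,ds\,dt\to 0$ only controls a signed integral, not an $L^1$ norm. Either an additional uniform $L^1$ (or $L^q$) bound on $w_\varepsilon$ near $\Gamma$ must be added, or it must be verified in the application (where $w_\varepsilon=(v-u_\varepsilon)\partial_t v$ and it does hold). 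Relatedly, the uniform Jacobian estimate $|J(\xi,\tau)-1|\leq C\tau$ needs curvature bounds on $\Gamma$ (say $C^{1,1}$), not mere Lipschitz regularity; since the paper's own definition of $S_\varepsilon$ already presupposes a well-defined normal field, this is a shared assumption rather than a defect of your argument alone.
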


For an arbitrary $v\in{\mathcal X}_\Gamma\hookrightarrow 
{\mathcal X}_\varepsilon\cap C([0,T];H^1(\Omega\setminus \overline{\Omega_1})) $, by
Lemma \ref{ljf} ${\bf a)}$ we have 
$$\int_{S_\varepsilon }{1\over
  2\varepsilon \gamma}|v(0)-u^0|^2dx\rightarrow\int_\Gamma{1\over  2}|v(0)-u^0|^2ds.$$

In order to apply Lemma \ref{ljf} {\bf b)}, we define
$w_\varepsilon =(v-u_\varepsilon )\partial_t v$ and $w=(v-u)\partial_t v.$ By
(\ref{linf}) we obtain
$$\int^T_0\int_\Gamma(w_\varepsilon -w)dsdt\rightarrow 0.$$

Since $\partial_t\nabla v\in L^2(\Omega\times]0,T[)$ we have
$$\|\nabla
w_\varepsilon \|_{q,S_\varepsilon }\leq\|\nabla(v-u_\varepsilon )\|_{2,S_\varepsilon }\|\partial_t
  v\|_{{2q\over 2-q},S_\varepsilon }+\|v-u_\varepsilon \|_{{2q\over
      2-q},S_\varepsilon }\|\partial_t\nabla v\|_{2,S_\varepsilon }$$
for $q>1$ satisfying $2q/(2-q)\leq 2n/(n-2)$
that means $q\leq n/(n-1).$

 Thus we can pass to the limit on $\varepsilon\rightarrow 0$ 
in (\ref{wteo4})
to obtain the desired solution.

\section{Proof of Theorem \ref{teo1}}
\label{s3}

The generalized version of the Poincar\'e inequality applied to functions admitting jumps
\cite{amar2005} can once more extended to the following version.
\begin{lemma}\label{lpoin}
Let ${\bf v}\in {\bf V}$.
Then
\begin{equation}\label{poinc}
\int_{\Omega_1} v_1^2dx\leq C\left\{
\int_\Omega |\nabla {\bf v}|^2dx+\int_\Gamma [v]^2ds\right\}.
\end{equation}
\end{lemma}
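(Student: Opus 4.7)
The strategy is to reduce the estimate to a standard Poincaré inequality on $\Omega_1$ of the form
\[
\|v_1\|_{2,\Omega_1}^2 \le C\bigl(\|\nabla v_1\|_{2,\Omega_1}^2 + \|v_1\|_{2,\Gamma}^2\bigr),
\]
and then control $\|v_1\|_{2,\Gamma}$ by $\|[v]\|_{2,\Gamma}$ together with $\|\nabla v_2\|_{2,\Omega_2}$, thereby transferring the boundary information that $v_2$ vanishes on $\Gamma_2$ (with meas$(\Gamma_2)>0$) to the ``free'' component $v_1$.

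First I would separate the two geometric cases. If meas$(\Gamma_1)>0$, then $v_1\in H^1_{\Gamma_1}(\Omega_1)$ already satisfies a standard Poincaré inequality $\|v_1\|_{2,\Omega_1}\le C\|\nabla v_1\|_{2,\Omega_1}$, so (\ref{poinc}) is immediate. The interesting case is $\Gamma_1=\emptyset$, in which $\partial\Omega_1=\Gamma$ and $H^1_{\Gamma_1}(\Omega_1)\equiv H^1(\Omega_1)$ carries no a priori boundary constraint. Here I would invoke the generalized Poincaré--Friedrichs inequality on the Lipschitz domain $\Omega_1$ with the trace seminorm on $\Gamma$: since meas$(\Gamma)>0$, a standard compactness/contradiction argument (or the Ne\v{c}as inequality) yields
\[
\|v_1\|_{2,\Omega_1}^2 \le C\bigl(\|\nabla v_1\|_{2,\Omega_1}^2 + \|v_1\|_{2,\Gamma}^2\bigr).
\]

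Next I would handle the trace of $v_1$ on $\Gamma$ by writing $v_1 = v_2 - [v]$ on $\Gamma$ and applying the triangle inequality,
\[
\|v_1\|_{2,\Gamma} \le \|v_2\|_{2,\Gamma} + \|[v]\|_{2,\Gamma}.
\]
Because meas$(\Gamma_2)>0$ and $v_2\in H^1_{\Gamma_2}(\Omega_2)$, the classical Poincaré inequality on $\Omega_2$ combined with the continuity of the trace operator $H^1(\Omega_2)\to L^2(\Gamma)$ gives
\[
\|v_2\|_{2,\Gamma} \le C\,\|v_2\|_{H^1(\Omega_2)} \le C\,\|\nabla v_2\|_{2,\Omega_2}.
\]
Squaring and chaining these estimates, with the elementary inequality $(a+b+c)^2\le 3(a^2+b^2+c^2)$, produces
\[
\int_{\Omega_1} v_1^2\,dx \le C\Bigl(\int_{\Omega_1}|\nabla v_1|^2\,dx + \int_{\Omega_2}|\nabla v_2|^2\,dx + \int_\Gamma [v]^2\,ds\Bigr),
\]
which is exactly (\ref{poinc}) after recognizing the first two terms as $\int_\Omega |\nabla\mathbf{v}|^2\,dx$.

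The main delicate point is the Poincaré inequality on $\Omega_1$ controlled by the gradient and the boundary $L^2$-seminorm on $\Gamma$; this is where one genuinely needs $\Omega_1$ to be Lipschitz and $\Gamma$ to have positive $(n-1)$-dimensional measure, and it underlies why the geometric exclusions illustrated in Fig.~\ref{fig1}(d) are necessary. Everything else is a direct application of the trace theorem and triangle inequality.
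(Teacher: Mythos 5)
Your proof is correct, but it follows a genuinely different route from the paper's. The paper argues by contradiction with compactness on the whole product space ${\bf V}$: assuming (\ref{poinc}) fails, it takes a normalized sequence with $\|v_{1m}\|_{2,\Omega_1}=1$ and $\|\nabla {\bf v}_m\|^2_{2,\Omega}+\|[v_m]\|^2_{2,\Gamma}\le 1/m$, passes to a weak limit whose gradient and jump vanish, identifies that limit as ${\bf 0}$ (using $v_2=0$ on $\Gamma_2$ and connectedness), and derives a contradiction via the compact embedding ${\bf V}\hookrightarrow\hookrightarrow L^2(\Omega_1)\times L^2(\Omega_2)$. You instead decompose the estimate subdomain by subdomain: a Friedrichs-type inequality on $\Omega_1$ with the $L^2(\Gamma)$ trace term, then $v_1=v_2-[v]$ on $\Gamma$, the trace theorem on $\Omega_2$, and the classical Poincar\'e inequality on $\Omega_2$ from the Dirichlet condition on $\Gamma_2$. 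Your version makes the constant's provenance explicit (it is a composition of standard trace and Poincar\'e constants) and isolates cleanly where each geometric hypothesis enters --- meas$(\Gamma_2)>0$ for the $\Omega_2$ step, $\Gamma$ of positive $(n-1)$-measure and $\Omega_1$ Lipschitz for the $\Omega_1$ step --- whereas the paper's global contradiction argument is shorter to state and hides the compactness inside a single embedding; both are complete, and the one nonstandard ingredient you invoke (the Friedrichs inequality on $\Omega_1$ with boundary $L^2$ control) is itself classical for Lipschitz domains, so no gap remains.
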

\begin{proof}
If $\Gamma_1\not=\emptyset$, the classical Poincar\'e inequality is valid and then (\ref{poinc})
clearly holds.
If $\Gamma_1=\emptyset$ we will prove by contradiction. Assuming that (\ref{poinc}) is not true, there exists a sequence $\{ {\bf v}_m\}\subset
 {\bf V}$ such that for all $m\in \mathbb N$
$$\|  v_{1m}\|_{2,\Omega_1}=1\quad
\mbox{and }\quad\|
\nabla  {\bf v}_m\|^2_{2,\Omega}+\|[v_m] \|^2_{2,\Gamma}\leq{1/m}.$$
Hence $\nabla {\bf v}_m\rightarrow {\bf 0}$ in ${\bf L}^2(\Omega)$ and 
$[v_m] \rightarrow 0$ in $L^2(\Gamma).$ Since
 $ {\bf V}$ is a reflexive Banach space, we can extract a subsequence of $ {\bf v}_m$,
 still denoted by $ {\bf v}_m$, such that $ {\bf v}_m\rightharpoonup  {\bf v}$ in 
${\bf V}.$ Thus 
$\nabla  {\bf v}={\bf 0}$ in $\Omega$ and 
$v_1=v_2$ on $\Gamma.$ Consequently 
 $v_1\in H^1_{\Gamma_1}(\Omega_1)$ and  $v_2\in H^1_{\Gamma_2}(\Omega_2)$
satisfy $v_1\equiv v_2\equiv 0$.
 From the compact embedding ${\bf V}\hookrightarrow\hookrightarrow
 L^2(\Omega_1)\times L^2(\Omega_2)$ it follows that
$${\bf v}_m\rightarrow {\bf 0}\quad\mbox{ in } L^2(\Omega_1)\times L^2(\Omega_2).$$

Then we conclude that
$$\|v_{1m}\|_{2,\Omega_1}=1\rightarrow\|0\|_{2,\Omega_1}=1$$
which is a contradiction. 
\end{proof}

\subsection{Discretization in time}
\label{time3}

As in Section \ref{time}, we will construct weak solutions
${\bf u}^{i+1}={\bf u}(t_{i+1,m})$, $i\in\{0,1,\cdot\cdot\cdot,m-1\},$ of an approximate
time-discrete problem.
\begin{prop}\label{propui}
 Let the assumptions (\ref{defs})-(\ref{defj})
 be valid, $m\geq\sigma_\#T/\alpha_\#$ and $i\in \{0,1,\cdot\cdot\cdot,m-1\}$ be fixed,
 $[u^i]\in L^2(\Gamma)$,
\[{\bf f}^{i+1}=
{\bf f}(t_{i+1,m})\in {\bf V}'\quad\mbox{ and }\quad
g^{i+1}=
g(t_{i+1,m})\in Y'.\]
Then there exists a time-discrete solution ${\bf u}^{i+1}\in {\bf V}$  to the problem
\begin{eqnarray}
\int_\Omega\sigma\nabla {\bf u}^{i+1}\cdot\nabla({\bf v}-{\bf u}^{i+1})dx
+\int_\Gamma{\alpha\over  h}[u^{i+1}]\left([v]-[u^{i+1}]\right)ds+\nonumber\\
+\langle g^{i+1},v_1-u^{i+1}_1\rangle_\Gamma+\int_\Gamma\{j([v])-j([u^{i+1}])\}ds\geq\langle
{\bf f}^{i+1},{\bf v}-{\bf u}^{i+1}\rangle_\Omega+\nonumber
\\+\int_\Gamma{\alpha\over h}[u^i]\left(
[v]-[u^{i+1}]\right)ds,\quad \forall{\bf  v}\in {\bf V},\label{ui}\end{eqnarray}
with $[u^0]=S$ on $\Gamma.$
\end{prop}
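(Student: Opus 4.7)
The plan is to mirror the argument used for Proposition \ref{propuiw}, i.e.\ to cast the variational inequality (\ref{ui}) as a standard elliptic variational inequality governed by a coercive, linear, hemicontinuous operator perturbed by a convex, proper, lower semicontinuous functional, and then invoke the abstract existence theorem from \cite{z}. The new ingredients compared with the Wentzell case are (i) the space is the product-type space ${\bf V}$ rather than $H_\beta$, (ii) the penalizing term on $\Gamma$ acts only on the jump $[v]$ rather than on the trace of $v$ itself, and (iii) an extra linear source $\langle g^{i+1}, v_1\rangle_\Gamma$ is present. Accordingly, I would define $A:{\bf V}\to {\bf V}'$ by
\[
\langle A{\bf u},{\bf v}\rangle=\int_\Omega\sigma\nabla{\bf u}\cdot\nabla {\bf v}\,dx+\int_\Gamma\frac{\alpha}{h}[u][v]\,ds,
\]
and take $\varphi({\bf v})=\int_\Gamma j([v])\,ds$ if $j([v])\in L^1(\Gamma)$ and $+\infty$ otherwise. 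Then $\varphi$ is proper, convex and lower semicontinuous by (\ref{defj}), while $A$ is clearly linear, bounded, symmetric and hence hemicontinuous.

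The main obstacle is coercivity of $A$ on ${\bf V}$ endowed with the non-standard norm $\|{\bf v}\|_{\bf V}=\|\nabla v_1\|_{2,\Omega_1}+\|\nabla v_2\|_{2,\Omega_2}+\|[v]\|_{2,\Gamma}$. The inequality
\[
\langle A{\bf v},{\bf v}\rangle\ge \sigma_\#\int_\Omega|\nabla {\bf v}|^2\,dx+\frac{\alpha_\#}{h}\int_\Gamma[v]^2\,ds
\]
controls two of the three pieces of $\|{\bf v}\|_{\bf V}^2$ directly, but not the $L^2$ parts needed to make ${\bf V}$ a Hilbert space with the claimed norm. Here the generalized Poincar\'e inequality of Lemma \ref{lpoin} is essential: it yields $\|v_1\|_{2,\Omega_1}^2\le C(\|\nabla {\bf v}\|_{2,\Omega}^2+\|[v]\|_{2,\Gamma}^2)$, and combined with the classical Poincar\'e inequality on $\Omega_2$ (valid because $\Gamma_2\subset\partial\Omega_2$ has positive measure) one gets $\|{\bf v}\|_{\bf V}^2$ fully estimated. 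Taking the constants into account, the standing assumption $m\ge \sigma_\# T/\alpha_\#$, i.e.\ $\alpha_\#/h\ge \sigma_\#$, lets one choose a single positive lower bound, and we obtain
\[
\langle A{\bf v},{\bf v}\rangle+\varphi({\bf v})\ge c_0\|{\bf v}\|_{\bf V}^2
\]
with $c_0>0$ independent of ${\bf v}$, hence coercivity.

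Finally, I would collect the data-dependent terms into an element $b\in {\bf V}'$ via
\[
\langle b,{\bf v}\rangle=-\langle {\bf f}^{i+1},{\bf v}\rangle_\Omega+\langle g^{i+1},v_1\rangle_\Gamma-\int_\Gamma\frac{\alpha}{h}[u^i][v]\,ds,
\]
which is well defined because ${\bf f}^{i+1}\in {\bf V}'$, $g^{i+1}\in Y'$ with the trace $v_1|_\Gamma\in Y$, and $[u^i]\in L^2(\Gamma)$ pairs with $[v]\in L^2(\Gamma)$ by the continuity of the trace operators on ${\bf V}$. The abstract theory in \cite[pp.~874--875, 892--893]{z} then yields a unique ${\bf u}^{i+1}\in {\bf V}$ satisfying (\ref{ui}); induction on $i$, started from ${\bf u}^0$ given by (\ref{duo}) so that $[u^0]=S\in L^2(\Gamma)$, propagates the condition $[u^{i}]\in L^2(\Gamma)$ needed at the next step through the trace inequality built into the norm of ${\bf V}$.
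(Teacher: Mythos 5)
Your proposal is correct and follows essentially the same route as the paper: the same operator $A$, the same functional $\varphi$, the same right-hand side $b$, coercivity from $h\leq\alpha_\#/\sigma_\#$, and the same appeal to the elliptic variational inequality theory of \cite[pp.~874--875, 892--893]{z}. The only cosmetic difference is that you fold Lemma \ref{lpoin} into the coercivity discussion, whereas the paper uses it only to justify that $\|\cdot\|_{\bf V}$ is a genuine norm and then gets coercivity directly in that norm.
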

\begin{proof}
We show the existence of a solution to (\ref{ui}) with the aid of
the general theory on maximal monotone mappings applied to
elliptic variational inequalities \cite[pp. 874-875, 892-893]{z}.
To this end, we define the mapping $A:{\bf V}\rightarrow {\bf V}'$ by
$$\langle A{\bf u},{\bf  v}\rangle=\int_\Omega \sigma\nabla {\bf u}\cdot\nabla {\bf v}dx
+\int_\Gamma {\alpha\over h }{[u ]}[v]ds$$
which is single-valued, linear and hemicontinuous; and
the mapping $\varphi:{\bf V}\rightarrow [0,+\infty]$ by
$$\varphi({\bf v})=\left\{\begin{array}{ll}
\int_\Gamma j([v])ds,&\mbox{ if }
  j([v])\in L^1(\Gamma)\\
+\infty,&\mbox{ otherwise}
\end{array}\right.$$
which is convex, lower semicontinuous and $\varphi\not\equiv +\infty$. Because of 
(\ref{defs})-(\ref{defj})
 the coercivity condition
$$\langle A{\bf u},{\bf u}\rangle+\varphi({\bf u})
=\int_\Omega \sigma|\nabla {\bf u}|^2dx+\int_\Gamma {\alpha\over h}[u ]^2ds
+\int_\Gamma j([u])ds\geq  \sigma_\#\|{\bf u}\|_{\bf V}^2,$$
 is valid for any $h\leq\alpha_\#/\sigma_\#$.
Then, for ${\bf b}\in {\bf V}'$ such that
$$\langle {\bf b},{\bf  v}\rangle=-\langle {\bf f}^{i+1}, {\bf v}\rangle_\Omega+\langle g^{i+1}, v_1\rangle_\Gamma
-\int_\Gamma {\alpha\over h}[u^i][v]ds,$$
the variational inequality (\ref{ui}) has a unique weak solution
${\bf u}={\bf u}^{i+1}\in {\bf V}.$
\end{proof}
\begin{remark}
Since $[u^0]=S$ on $\Gamma$ means that $[u^0]\in L^2(\Gamma)$, then Proposition \ref{propui}
guarantees the existence of ${\bf u}^1\in {\bf V}$ and consequently $[u^1]\in L^2(\Gamma)$.
Therefore,  Proposition \ref{propui} successively
guarantees the existence of ${\bf u}^{i+1}\in {\bf V}$ for every $i=1,\cdots,m-1$.
\end{remark}

\subsection{Existence of a limit ${\bf u}$}

\begin{prop}\label{tilde}
 Let  $m\geq\sigma_\#T/\alpha_\#$.
For all  $i\in \{0,1,\cdots,m-1\}$, the estimate holds 
 \begin{equation}\label{dui1}
\alpha_\#\|[u^{i+1}]\|^2_{2,\Gamma}\leq
C(\|{\bf f}\|^2_{L^2(0,T;{\bf V}')}+\|g\|^2_{L^2(0,T;Y')}+ 
\|S\|_{2,\Gamma}^2).\end{equation}
Moreover, if
$\{\widetilde {\bf u}_m\}_{m\in\mathbb N}$ is the sequence defined by the step functions
$\widetilde {\bf u}_m:I\rightarrow {\bf V}$
$$\widetilde {\bf u}_m (t)=\left\{\begin{array}{ll}
{\bf u}^{1}&\mbox{ for }t=0\\
{\bf  u}^{i+1}&\mbox{ in }]t_{i,m},t_{i+1,m}]
\end{array}\right.$$
then there exists ${\bf u}$ such that
$$\widetilde {\bf u}_m\rightharpoonup {\bf u}\mbox{ in }L^2(0,T;{\bf V}).$$
\end{prop}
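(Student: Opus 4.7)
The plan is to mirror Proposition \ref{tildew}, adapting the argument to the bilateral functional setting of ${\bf V}$ and to the additional duality pairing with $g$.

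First, I would test (\ref{ui}) with ${\bf v}={\bf 0}$, using $j(0)=0\leq j([u^{i+1}])$ from (\ref{defj}), to produce the energy inequality
\[
\int_\Omega\sigma|\nabla{\bf u}^{i+1}|^2dx+\int_\Gamma\frac{\alpha}{h}[u^{i+1}]^2ds
\leq \langle{\bf f}^{i+1},{\bf u}^{i+1}\rangle_\Omega -\langle g^{i+1},u^{i+1}_1\rangle_\Gamma
+\int_\Gamma\frac{\alpha}{h}[u^i][u^{i+1}]ds.
\]
Young's inequality then handles the three right-hand terms: the product $[u^i][u^{i+1}]$ is split evenly between the two time levels; the pairing $\langle{\bf f}^{i+1},{\bf u}^{i+1}\rangle_\Omega$ is bounded by $\|{\bf f}^{i+1}\|_{{\bf V}'}\|{\bf u}^{i+1}\|_{\bf V}$; and the pairing $\langle g^{i+1},u_1^{i+1}\rangle_\Gamma$ is controlled via continuity of the trace $H^1_{\Gamma_1}(\Omega_1)\to Y$, combined with Lemma \ref{lpoin} to obtain $\|u_1^{i+1}\|_Y\leq C\|{\bf u}^{i+1}\|_{\bf V}$. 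A small multiple of $\|{\bf u}^{i+1}\|_{\bf V}^2$ is then absorbed into the coercive part via (\ref{defs})--(\ref{defa}).

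Second, I would multiply through by $h$ and sum over $k=0,\ldots,i$. The jump contributions telescope in the usual Rothe fashion, leaving
\[
c\,h\sum_{k=0}^{i}\|{\bf u}^{k+1}\|_{\bf V}^2 + \alpha_\#\|[u^{i+1}]\|_{2,\Gamma}^2 \leq \alpha^\#\|S\|_{2,\Gamma}^2 + C\,h\sum_{k=0}^{i}\bigl(\|{\bf f}^{k+1}\|_{{\bf V}'}^2 + \|g^{k+1}\|_{Y'}^2\bigr),
\]
with $c>0$ depending only on the data. The restriction $m\geq\sigma_\#T/\alpha_\#$, equivalently $h\leq\alpha_\#/\sigma_\#$, is precisely the hypothesis already needed in Proposition \ref{propui} for existence of each ${\bf u}^{k+1}$, and it guarantees that the absorption above produces a strictly positive constant. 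The Lipschitz conditions (\ref{deff})--(\ref{defg}) then dominate the right-hand Riemann sums by $\|{\bf f}\|_{L^2(0,T;{\bf V}')}^2+\|g\|_{L^2(0,T;Y')}^2$ up to an $O(h)$-error, which yields (\ref{dui1}). Taking $i=m-1$ turns the first sum into the $L^2(0,T;{\bf V})$-norm of $\widetilde{\bf u}_m$, bounded uniformly in $m$, so Banach--Alaoglu in the reflexive Hilbert space $L^2(0,T;{\bf V})$ furnishes the subsequential weak limit ${\bf u}$.

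The principal novelty compared to the Wentzell case is the one-sided pairing $\langle g,v_1\rangle_\Gamma$: only $v_1$, not the jump $[v]$ or the ambient ${\bf v}$, is paired with $g$, so its control depends essentially on dominating the $Y$-trace norm of $v_1$ by the full $\bf V$-norm, which is exactly what Lemma \ref{lpoin} together with the $H^{1/2}_{00}$-trace theorem provide. Once these ingredients are in hand the rest of the argument is a direct transcription of the Wentzell-case estimate and poses no new difficulty.
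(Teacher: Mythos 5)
Your overall strategy (test with ${\bf v}={\bf 0}$, Young's inequality, telescoping sum, Banach--Alaoglu) is the paper's strategy, and your treatment of the $g$-pairing via the trace constant $C_Y$ and Lemma \ref{lpoin} matches the paper exactly. But there is a genuine gap at the absorption step. You claim that the small multiple of $\|{\bf u}^{i+1}\|_{\bf V}^2$ produced by Young's inequality is ``absorbed into the coercive part,'' and your summed inequality accordingly shows no trace of it. The problem is that $\|{\bf u}^{i+1}\|_{\bf V}^2$ contains the jump component $\|[u^{i+1}]\|_{2,\Gamma}^2$, and when $\Gamma_1=\emptyset$ this component is \emph{not} controlled by the elliptic term $\int_\Omega\sigma|\nabla{\bf u}^{i+1}|^2dx$ (take $v_1$ a nonzero constant on $\Omega_1$ and $v_2=0$: the gradient vanishes but the jump does not). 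The only other coercive term on the left is $\int_\Gamma\frac{\alpha}{h}[u^{i+1}]^2ds$, but that term is committed to the telescoping against $\int_\Gamma\frac{\alpha}{h}[u^i][u^{i+1}]ds$; if you siphon off an $h$-independent multiple $\varepsilon\|[u^{i+1}]\|^2_{2,\Gamma}$ from it, the telescoping no longer closes and a residual $\varepsilon\sum_{k}\|[u^{k+1}]\|_{2,\Gamma}^2$ reappears on the right after summation. (Choosing $\varepsilon\sim 1/h$ instead makes the residual of order $\sum_k\|[u^{k+1}]\|^2_{2,\Gamma}$ without the factor $h$, which is worse.)

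Consequently the correct summed inequality carries an extra term: in the paper it reads
\begin{equation*}
\sigma_\# h\sum_{k=0}^i\|{\bf u}^{k+1}\|_{\bf V}^2+\alpha_\#\|[u^{i+1}]\|^2_{2,\Gamma}
\leq \frac{2}{\sigma_\#}h\sum_{k=1}^{i+1}\bigl(\|{\bf f}^k\|^2_{{\bf V}'}+C_Y^2\|g^k\|^2_{Y'}\bigr)
+\sigma_\# h\sum_{k=0}^i\|[u^{k+1}]\|^2_{2,\Gamma}+\alpha^\#\|S\|^2_{2,\Gamma},
\end{equation*}
and the term $\sigma_\# h\sum_k\|[u^{k+1}]\|^2_{2,\Gamma}$ is then removed by the \emph{discrete Gronwall lemma} (this is where the hypothesis $m\geq\sigma_\#T/\alpha_\#$, i.e.\ $h\leq\alpha_\#/\sigma_\#$, earns its keep beyond the existence step of Proposition \ref{propui}). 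This Gronwall step is the one genuine difference between the Signorini estimate and the Wentzell estimate of Proposition \ref{tildew}, where the $H_\beta$-norm is a pure gradient (semi)norm and full absorption really does work; your proposal transcribes the Wentzell argument too literally and thereby skips precisely the new difficulty. To repair it, keep the residual jump sum on the right-hand side, note $\|[u^{k+1}]\|_{2,\Gamma}\leq\|{\bf u}^{k+1}\|_{\bf V}$, and apply the discrete Gronwall inequality to the sequence $a_i=\alpha_\#\|[u^{i+1}]\|^2_{2,\Gamma}$ to obtain (\ref{dui1}), then feed that bound back in to recover (\ref{ui1}).
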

\begin{proof}
Testing in (\ref{ui}) with ${\bf v}={\bf 0}$ and using (\ref{defj}), we get
$$\int_\Omega\sigma|\nabla {\bf u}^{i+1}|^2dx+\int_\Gamma{\alpha\over  h}[u^{i+1}]^2ds\leq\langle 
{\bf f}^{i+1},{\bf u}^{i+1}\rangle_\Omega-\langle g^{i+1},
 u^{i+1}_1\rangle_\Gamma+\int_\Gamma {\alpha\over h}[u^i][u^{i+1}]ds,$$
 for all  $i\in \{0,1,\cdot\cdot\cdot,m-1\}$.
Hence, applying (\ref{defs})
and Lemma \ref{lpoin} it follows
\begin{eqnarray*}
{\sigma_\#\over 2} \| \nabla {\bf u}^{i+1}\|^2_{2,\Omega}+
\int_\Gamma{\alpha\over 2 h}[u^{i+1}]^2ds
\leq
{1\over 2\sigma_\#}\left(\|{\bf f}^{i+1}\|_{{\bf V}'}+C_Y\|g^{i+1}\|_{
Y'}\right)^2+\\
+{\sigma_\#\over 2} \|  [{u}^{i+1}]\|^2_{2,\Gamma}
+\int_\Gamma{\alpha\over  2 h}[u^i]^2ds,
\end{eqnarray*}
with $C_Y$ standing for the continuity constant of
$H^1_{\Gamma_1}(\Omega_1)\hookrightarrow Y$.
Summing on $k=0,...,i$, multiplying by $2h$  and applying (\ref{defa}), we find 
\begin{eqnarray*}
 \sigma_\#h\sum_{k=0}^i  \|{\bf u}^{k+1}\|^2_{\bf V}+ \alpha_\#  \| [u^{i+1}]\|^2_{2,\Gamma}
\leq {2\over \sigma_\#}h\sum_{k=1}^{i+1}(\|{\bf f}^k
\|^2_{{\bf V}'}+C_Y^2\|g^k\|^2_{Y'})+\\
+\sigma_\#h\sum_{k=0}^i  \|  [{u}^{k+1}]\|^2_{2,\Gamma}
+{\alpha^\#} \|S\|_{2,\Gamma}^2.
\end{eqnarray*}

Consequently, by the Gronwall Lemma we get (\ref{dui1}) and, for
$i=m-1$,
 \begin{equation}\label{ui1}
\|\widetilde{\bf  u}_m\|^2_{L^2(0,T;{\bf V})}\leq
C(\|{\bf f}\|^2_{L^2(0,T;{\bf V}')}+\|g\|^2_{L^2(0,T;Y')}+ 
\|S\|_{2,\Gamma}^2).\end{equation}
Thus we can extract a subsequence, still denoted by $\widetilde
{\bf u}_m,$ weakly convergent to ${\bf u}\in L^2(0,T;{\bf V}).$
\end{proof}

\begin{prop}\label{UU}
 Let  $m\geq\sigma_\#T/\alpha_\#$ and
$U_m:[0,T[\rightarrow L^2(\Gamma)$ be defined by
$$U_m (t)=\left\{\begin{array}{ll}
\displaystyle{[u^{1}]-S
\over h}&\mbox{ for }t=0\\&\\
\displaystyle{[u^{i+1}]-[u^{i}]
\over h}&\mbox{ in }]t_{i,m},t_{i+1,m}]
\end{array}\right.\mbox{ on }\Gamma.$$
If the assumptions (\ref{defs})-(\ref{defj}), (\ref{js2}) and
(\ref{deff})-(\ref{duo}) are fulfilled,
then the estimate holds
\begin{equation}\label{ui2}
\|\widetilde{\bf  u}_m\|^2_{L^\infty(0,T;{\bf V})}+\|U_m\|^2_{2,\Sigma}\leq
C(\|{\bf f}\|^2_{L^2(0,T;{\bf V}')}+\|g\|^2_{L^2(0,T;Y')}+\|{\bf u}^0\|_{\bf
V}^2).
\end{equation}
Hence, we can extract a subsequence, still denoted by $U_m,$ weakly
convergent to $U\in L^2(\Sigma).$
\end{prop}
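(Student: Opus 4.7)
The plan is to mirror the argument of Proposition \ref{zz} from the Wentzell setting, now adapted to the jump-based functional framework of the Signorini model. First I will test the variational inequality (\ref{ui}) for the solution ${\bf u}^{i+1}$ against ${\bf v}={\bf u}^i$, rearrange, and use the convexity of $j$ to dispose of the subdifferential through the inequality
\begin{eqnarray*}
\int_\Omega\sigma\nabla{\bf u}^{i+1}\cdot\nabla({\bf u}^{i+1}-{\bf u}^i)dx+\int_\Gamma\frac{\alpha}{h}([u^{i+1}]-[u^i])^2 ds+\\
+\int_\Gamma\{j([u^{i+1}])-j([u^i])\}ds\leq\langle{\bf f}^{i+1},{\bf u}^{i+1}-{\bf u}^i\rangle_\Omega+\langle g^{i+1},u^{i+1}_1-u^i_1\rangle_\Gamma,
\end{eqnarray*}
in which the $\alpha$-term is exactly $h\int_\Gamma\alpha (U^{i+1})^2 ds$ with $U^{k+1}=([u^{k+1}]-[u^k])/h$.

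Next I will sum these inequalities over $k=0,\ldots,i$. Application of the identity $2(a-b)a=a^2+(a-b)^2-b^2$ to $a=\nabla{\bf u}^{k+1}$, $b=\nabla{\bf u}^k$, exactly as in Proposition \ref{zz}, collapses the gradient contribution into $\frac12\int_\Omega\sigma|\nabla{\bf u}^{i+1}|^2 dx-\frac12\int_\Omega\sigma|\nabla{\bf u}^0|^2 dx$ plus a nonnegative remainder, while the $j$-contributions telescope to $\int_\Gamma j([u^{i+1}])ds-\int_\Gamma j(S)ds$; the latter is absorbed into $C(\|{\bf u}^0\|_{\bf V}^2+1)$ by (\ref{js2}) together with the remark after Theorem \ref{teo1}. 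The two dualities on the right-hand side are then handled by Abel summation:
$$\sum_{k=0}^i\langle{\bf f}^{k+1},{\bf u}^{k+1}-{\bf u}^k\rangle_\Omega=\langle{\bf f}^{i+1},{\bf u}^{i+1}\rangle_\Omega-\langle{\bf f}^1,{\bf u}^0\rangle_\Omega-\sum_{k=1}^i\langle{\bf f}^{k+1}-{\bf f}^k,{\bf u}^k\rangle_\Omega,$$
and analogously for the $g$-pairing. The Lipschitz assumptions (\ref{deff})--(\ref{defg}) give $\|{\bf f}^{k+1}-{\bf f}^k\|_{{\bf V}'}\leq d_1 h$ and $\|g^{k+1}-g^k\|_{Y'}\leq d_2 h$, so each remainder is controlled by a constant times $h\sum_{k=1}^i\|{\bf u}^k\|_{\bf V}$. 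Young's inequality plus the already-established $L^2(0,T;{\bf V})$-bound (\ref{ui1}) of Proposition \ref{tilde} absorb these sums.

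Combining everything yields uniform-in-$(i,m)$ control of $\|\nabla{\bf u}^{i+1}\|_{2,\Omega}^2$ and of $h\sum_{k=0}^i\|U^{k+1}\|_{2,\Gamma}^2$ by the right-hand side of (\ref{ui2}); Lemma \ref{lpoin} then upgrades the former to a bound on $\|{\bf u}^{i+1}\|_{\bf V}$, while the latter is exactly $\|U_m\|_{2,\Sigma}^2$ up to $\alpha_\#$. Banach--Alaoglu extracts a weakly convergent subsequence of $\{U_m\}$ in $L^2(\Sigma)$. The main obstacle is the treatment of the $g$-duality, because $g$ acts on $u_1|_\Gamma$ through the space $Y$ rather than on the jump $[u]$ on which the rest of the computation naturally decouples: the Abel remainder $\sum\langle g^{k+1}-g^k,u^k_1\rangle_\Gamma$ must be routed through the continuous trace $H^1_{\Gamma_1}(\Omega_1)\hookrightarrow Y$ (with the constant $C_Y$ already used in Proposition \ref{tilde}) before invoking (\ref{ui1}). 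The threshold $m\geq\sigma_\# T/\alpha_\#$ inherited from Proposition \ref{propui} is what secures coercivity of each time-step problem and hence the whole computation.
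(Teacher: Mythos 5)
Your proposal follows essentially the same route as the paper's proof: test (\ref{ui}) with ${\bf v}={\bf u}^i$, collapse the gradient terms with the identity $2(a-b)a=a^2+(a-b)^2-b^2$, telescope the $j$-terms and bound $\int_\Gamma j(S)\,ds$ via (\ref{js2}), Abel-sum the ${\bf f}$- and $g$-pairings, and control the remainders through (\ref{deff})--(\ref{defg}) and the bound (\ref{ui1}). One small correction: the upgrade from the gradient bound to a bound on $\|{\bf u}^{i+1}\|_{\bf V}$ does not come from Lemma \ref{lpoin} (which estimates $\|v_1\|_{2,\Omega_1}$, a quantity not appearing in the ${\bf V}$-norm), but from gathering the jump estimate (\ref{dui1}) of Proposition \ref{tilde}, since $\|{\bf v}\|_{\bf V}$ contains the term $\|[v]\|_{2,\Gamma}$; this is exactly how the paper closes the argument.
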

\begin{proof}
 For a fixed $t$, there exists
$i\in\{0,\cdot\cdot\cdot,m-1\}$ such that  $t\in]t_{i,m};t_{i+1,m}]$.
Choosing ${\bf v}={\bf u}^i$ as a test function in (\ref{ui}), we have
\begin{eqnarray*}
\int_\Omega\sigma\nabla {\bf u}^{i+1}\cdot\nabla({\bf u}^{i+1}-{\bf u}^i)dx+
\int_\Gamma{\alpha\over  h}([u^{i+1}]-[u^i])^2ds+\int_\Gamma
 j([u^{i+1}])ds\leq\\ \leq\langle
g^{i+1},u_1^{i}-u^{i+1}_1\rangle_\Gamma+\int_\Gamma j([u^i])ds+\langle{\bf  f}^{i+1},
{\bf u}^{i+1}-{\bf u}^i\rangle_\Omega.
\end{eqnarray*}
Summing on $k=0,...,i$ and remarking that
\begin{eqnarray*}
\sum_{k=0}^i\int_\Omega\sigma\nabla{\bf  u}^{k+1}\cdot\nabla({\bf u}^{k+1}-{\bf u}^k)dx&=&
{1\over 2}\int_\Omega\sigma|\nabla {\bf u}^{i+1}|^2dx-{1\over 2}\int_\Omega\sigma
|\nabla {\bf u}^0|^2dx+\\&+&{1\over 2}\sum_{k=0}^i
\int_\Omega\sigma|\nabla({\bf u}^{k+1}-{\bf u}^k)|^2dx
\end{eqnarray*}
then we find
\begin{eqnarray}
{\sigma_\# \over 2}\|\nabla {\bf u}^{i+1}\|^2_{2,\Omega}+
\alpha_\#\sum_{k=0}^i h\int_\Gamma\left({[u^{k+1}]-[u^k]\over
h}\right)^2ds\leq {\sigma^\#
\over 2}\|\nabla {\bf u}^0\|_{2,\Omega}^2+\nonumber\\+
\int_\Gamma j(S)ds+\langle g^1,
u^0_1\rangle_\Gamma+\sum_{k=1}^i\langle g^{k+1}-g^k,u^k_1\rangle_\Gamma-\langle
g^{i+1},u^{i+1}_1\rangle_\Gamma\nonumber\\
-\langle {\bf f}^1,
{\bf u}^0\rangle_\Omega-\sum_{k=1}^i\langle {\bf f}^{k+1}-{\bf f}^k,{\bf u}^k\rangle_\Omega+
\langle{\bf f}^{i+1},{\bf u}^{i+1}\rangle_\Omega.\label{dtui}
\end{eqnarray}

 Using (\ref{deff})-(\ref{defg}),  it follows
 \begin{eqnarray*}
 \sum_{k=1}^i\langle{\bf  f}^{k+1}-{\bf f}^k,{\bf u}^k\rangle_\Omega\leq d_1 h
 \sum_{k=1}^i\|{\bf u}^k\|_{\bf V}; \\
 \sum_{k=1}^i\langle g^{k+1}-g^k,u^k_1\rangle_\Gamma\leq d_2hC_Y
 \sum_{k=1}^i\|{\bf u}^k\|_{\bf V}.
 \end{eqnarray*}
Therefore, inserting the above inequalities in (\ref{dtui}), applying (\ref{ui1}) and
gathering (\ref{dui1}), it results
(\ref{ui2}).
\end{proof}

  We again have to relate the weak limits $\bf u$ and $U$.
\begin{prop}\label{duu}
Let $\bf u$ and $U$ be the weak limits obtained in Propositions \ref{tilde} and \ref{UU}, 
respectively. Then
$$\partial_t[u]=U\mbox{ in }L^2(\Gamma),\mbox{ for almost all }t\in
I.$$
\end{prop}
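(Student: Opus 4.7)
The plan is to mimic the strategy used in the proof of Proposition \ref{duuw}, now working with the jump $[u]$ in place of $u$ itself. Define the Rothe sequence in this setting, analogously to Definition \ref{defro}, by
\[
{\bf u}_m(x,t) = {\bf u}^i(x) + (t - t_{i,m})\,\frac{{\bf u}^{i+1}(x) - {\bf u}^i(x)}{h}
\quad\text{in } I_{i,m},
\]
so that on $\Gamma$ the jump satisfies the telescoping identity
\[
\int_0^t U_m(\tau)\,d\tau = [u_m(t)] - S,
\quad t \in I,
\]
by the definition of $U_m$ and the initial condition $[u^0] = S$ from Proposition \ref{propui}.

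Next, since $U \in L^2(\Sigma)$ by Proposition \ref{UU}, the function $w(t) := \int_0^t U(\tau)\,d\tau$ belongs to $C([0,T]; L^2(\Gamma))$ and the Riesz representation gives $(w(t), v) = \int_0^t (U(\tau), v)\,d\tau$ for every $v \in L^2(\Gamma)$. The weak convergence $U_m \rightharpoonup U$ in $L^2(\Sigma)$ then yields the pointwise (in $t$) limit
\[
\bigl([u_m(t)] - S - w(t),\, v\bigr) = \int_0^t \bigl(U_m(\tau) - U(\tau),\, v\bigr)\,d\tau \longrightarrow 0,
\qquad\forall v \in L^2(\Gamma).
\]

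The key technical step is to pass from this pointwise-in-$t$ convergence to convergence in $L^2(\Sigma)$. For this I would establish a uniform bound of the form
\[
\|[u_m]\|_{L^\infty(0,T;L^2(\Gamma))} \leq C,
\]
which follows from the estimate (\ref{dui1}) applied to both ${\bf u}^i$ and ${\bf u}^{i+1}$ in the piecewise-linear interpolation, giving bounds independent of $m$ and $t$. With this uniform bound in hand, the Lebesgue Dominated Convergence Theorem applies to produce
\[
\lim_{m\to +\infty} \int_0^T \bigl([u_m(t)] - S - w(t),\, v\bigr)\,dt = 0,
\qquad \forall v \in L^2(\Gamma).
\]
A standard density argument (extending first to piecewise constant $v(t)$ and then to arbitrary $v \in L^2(\Sigma)$) gives the same convergence for time-dependent test functions.

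Finally, by uniqueness of the weak limit of $[u_m]$ (identified with $[u]$ via Proposition \ref{tilde} together with the already-established fact that $\widetilde{\bf u}_m - {\bf u}_m \to {\bf 0}$, which I would verify as in Section \ref{passw}), we conclude $[u(t)] - S = \int_0^t U(\tau)\,d\tau$ in $L^2(\Gamma)$, hence $\partial_t [u] = U$ almost everywhere on $\Sigma$. The main obstacle is verifying the uniform $L^\infty(0,T;L^2(\Gamma))$ bound for the jump of the Rothe interpolant and confirming that $[\widetilde{\bf u}_m - {\bf u}_m] \to 0$ in $L^2(\Sigma)$, both of which reduce to manipulations of the discrete estimates (\ref{dui1}) and (\ref{ui2}) rather than introducing any genuinely new analytical difficulty.
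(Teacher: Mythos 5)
Your proposal follows essentially the same route as the paper's proof: the telescoping identity $\int_0^t U_m\,d\tau=[u_m](t)-S$ for the Rothe interpolant, the Riesz representation of $w(t)=\int_0^t U\,d\tau$, pointwise-in-$t$ convergence from the weak convergence of $U_m$, the uniform $L^\infty(0,T;L^2(\Gamma))$ bound via (\ref{dui1}), dominated convergence, a density argument over time-dependent test functions, and identification of the limit. The only (harmless) difference is that you explicitly flag the need for $[\widetilde{\bf u}_m]-[{\bf u}_m]\to 0$ at this stage, which the paper defers to Section \ref{passm}.
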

\begin{proof}  For a fixed $t$, there exists
$i\in\{0,\cdot\cdot\cdot,m-1\}$ such that  $t\in]t_{i,m};t_{i+1,m}]$.
By construction
$$\int^t_0 U_m(\tau)d\tau=\sum_{k=0}^{i-1}\int_{kh}^{(k+1)h}
{[u^{k+1}]-[u^k]\over h}d\tau+\int_{ih}^{t}
{[u^{i+1}]-[u^i]\over h}d\tau\quad\mbox{ on }\Gamma.$$

Setting  the Rothe sequence $\{{\bf u}_m\}_{m\in\mathbb N}$ defined by
$${\bf u}_m(x,t)={\bf u}^{i}(x)+(t-t_{i,m}){{\bf u}^{i+1}(x)-{\bf u}^{i}(x)\over
  h{}}\mbox{ in } I_{i,m},$$
for all  $i\in \{0,1,\cdots,m-1\}$
(compare to Definition \ref{defro}) under 
   $m\geq\sigma_\#T/\alpha_\#$, it results
$$\int^t_0 U_m(\tau)d\tau=[u_m](t)-S\mbox{ on }\Gamma.$$
From the Riesz theorem we get
$$([u_m](t)-S
,v)=\int^t_0(U_m(\tau),v)d\tau,\quad\forall v\in L^2(\Gamma).$$
Indeed, the right hand side of the above equation is
a bounded linear functional in $L^2(\Gamma)$, representable thus
 (uniquely) by the element $[u_m](t)-S$ from $L^2(\Gamma).$
Also there exists $w\in C([0,T];{L^2(\Gamma))}$ such that
$$(w(t),v)=\int^t_0(U(\tau),v)d\tau,\qquad\forall v\in L^2(\Gamma).$$

Then we have
\[
\lim_{m\rightarrow+\infty}\left([u_m](t)-S-w(t),v\right)=
\lim_{m\rightarrow+\infty}\int^t_0(U_m(\tau)-U(\tau),v)d\tau=0.\]

Let us prove that
 the norms of the  functions $[u_m]$ are uniformly bounded with
respect to $t\in I$ and $m$. From  the estimates
(\ref{dui1}) independent on $i$ and $m$, and
considering 
$$\|[u_m](t)\|_{2,\Gamma }=\|[u^{i}]\left(1+{t-t_{i,m}\over
  h{}}\right)+[u^{i+1}]{t-t_{i,m}\over h{}}\|_{2,\Gamma}$$
then, we get
$$\|[u_m ]\|^2_{L^\infty(0,T;L^2(\Gamma))}\leq
C(\|{\bf f}\|^2_{L^2(0,T;{\bf V}')}+\|g\|^2_{L^2(0,T;Y')}+\|S\|^2_{2,\Gamma}).$$

Hence,
 the Lebesgue Dominated Convergence Theorem yields
$$\lim_{m\rightarrow+\infty} \int^T_0([u_m](t)-S-w(t),v)dt=0,\qquad\forall v\in L^2(\Gamma).$$
Proceeding as in the proof of Proposition \ref{duuw}, we end up with
$$[u](t)-S=\int^t_0 U(\tau)d\tau.$$
\end{proof}

\subsection{Passage to the limit on $m\rightarrow +\infty$}
\label{passm}

Denoting ${\bf f}_m(t)={\bf f}^{i+1}$ and $g_m(t)=g^{i+1}$ for
$t\in ]t_{i,m},t_{i+1,m}]$ and $i\in\{0,\cdot\cdot\cdot,m-1\}$, we have
\begin{eqnarray*}
\int_Q\sigma\nabla\widetilde  {\bf u}_m\cdot\nabla vdxdt+\int_0^T\langle
g_m,v_1- \tilde u_{m1}\rangle_\Gamma dt+
\int_\Sigma\alpha U_m[v]dsdt+\int_\Sigma j([v])dsdt\geq\\
\geq
\int_Q\sigma|\nabla\widetilde {\bf  u}_m|^2dxdt
+\int_\Sigma\alpha U_m[ \tilde
u_m]dsdt+\int_\Sigma j([\tilde  u_m])dsdt+\int_0^T\langle{\bf  f}_m,
{\bf v}-\widetilde {\bf  u}_m\rangle_\Omega dt.
\end{eqnarray*}

From Propositions \ref{tilde} and \ref{UU}
to pass to the limit the above inequality
and recalling the weak lower s.c. property for the first 
term on the right hand side of the above inequality,
it remains to prove that
$$[\tilde u_m]\rightarrow [u]\mbox{ in }L^2(\Sigma).$$

Taking $\widetilde {\bf u}_m-{\bf u}=\widetilde{\bf  u}_m-{\bf u}_m+{\bf u}_m-{\bf u}$
first let us prove that
$$[\tilde u_m]-[u_m]\rightarrow 0\mbox{ in }L^2(\Sigma).$$
Since we have $0<t-t_{i,m}\leq h\mbox{ in }]t_{i,m};t_{i+1,m}]$ we
obtain
$$\|[\tilde
u_m](t)-[u_m](t)\|_{2,\Gamma}=\|U_m\|_{2,\Gamma}(h-(t-t_{i,m}))<
h\|U_m\|_{2,\Gamma}.$$
Using (\ref{ui2}) we derive
$$\|[\tilde u_m]-[u_m]\|_{2,\Sigma}\leq{CT\over
  {m}}(\|{\bf f}\|^2_{L^2(0,T;{\bf V}')}+\|g\|^2_{L^2(0,T;Y')}+\|{\bf u}
^0\|_{\bf V}^2)^{1/2}\rightarrow
  0.$$

Secondly the Rothe sequence $\{{\bf u}_m\}$ is bounded in $L^2(0,T;{\bf V})$,
and, from Prop. \ref{duu},
the functions $\partial_t[u_m]$ are bounded in
$L^2(\Sigma)$ then, for a
subsequence still denoted by $[u_m],$ the strong convergence holds
$$[u_m]\rightarrow[u]\mbox{ in } L^2(\Sigma).$$
Then it results
$$\int^T_0\int_\Gamma U_m[\tilde
u_m]dsdt\rightarrow\int^T_0\int_\Gamma U[u]dsdt=
\int^T_0\int_\Gamma[\partial_t
u][u]dsdt.$$

Therefore we can pass to the limit 
to obtain the weak formulation (\ref{pbu}). 
From the standard technique to prove uniqueness of solution, the
solution $\bf u$ to (\ref{pbu}) with (\ref{cf})
 is unique. Then the whole sequence
$\{\widetilde {\bf u}_m\}$ converges weakly to ${\bf u}\in L^2(0,T;{\bf V}).$

\section{Regularity in time}
\label{reg1}

{\sc Proof of Theorem \ref{prop1}.} 
 The proof follows the time discretization argument
as in Theorem  \ref{teo1},
considering the existence of the integral inequality (\ref{ui}).
Testing in (\ref{ui}) for the solutions ${\bf u}^{i+1}$ and ${\bf u}^i$ with
${\bf v}=({\bf u}^{i+1}+{\bf u}^i)/2$, summing the consecutive integral inequalities,
 and dividing by $h$,  we deduce
\begin{eqnarray*}
\int_\Omega h\sigma|\nabla {\bf Z}^{i+1}|^2dx+\int_\Gamma\alpha(U^{i+1}-U^i)U^{i+1}ds\leq
\langle {\bf f}^{i+1}-{\bf f}^i,{\bf Z}^{i+1}\rangle_\Omega+\nonumber\\
+\langle g^i-g^{i+1},Z^{i+1}_1\rangle_\Gamma
\end{eqnarray*}
with $U^{i+1}=([u^{i+1}]-[u^i])/h$ on $\Gamma$ and ${\bf Z}
^{i+1}=({\bf u}^{i+1}-{\bf u}^i)/h\in {\bf V}$, and 
taking into account the convexity of $j$.
Applying  the relation
$2(a-b)a=a^2+(a-b)^2-b^2$ to $a=U^{i+1}$ and $b=U^i,$
 and the  assumptions (\ref{deff})-(\ref{defg}),
it results
$$\int_\Omega h\sigma|\nabla{\bf  Z}^{i+1}|^2dx+\int_\Gamma\alpha(U^{i+1})^2ds\leq
\int_\Gamma\alpha(U^{i})^2ds+
(d_1+C_Yd_2) h\|{\bf Z}^{i+1}\|_{\bf V}.$$
Notice that the $\bf V$-norm can be no equivalent to a seminorm.
Thus summing on  $k=1,\cdots,i$ ($i\in\{1,\cdots,m-1\}$) we obtain
\begin{eqnarray}
{\sigma_\#\over 2}\sum^{i}_{k=1}h\|\nabla {\bf  Z}^{k+1}\|^2_{2,\Omega}
+\alpha_\#\|U^{i+1}\|^2_{2,\Gamma}\leq\alpha^\# \|U^1\|^2_{2,\Gamma}+\nonumber\\+T{
(d_1+C_Yd_2)^2\over 2\sigma_\#}+{\sigma_\#\over 2} \sum^{i}_{k=1}h\|{U}^{k+1}\|_{2,\Gamma}^2,\label{uz}
\end{eqnarray}
with $mh=T$.

Let us determine the estimate for the first term on the right hand
side of the above inequality.
Rewrite the integral identity (\ref{ui}) for $i=0$ in the form
\begin{eqnarray*}
\int_\Omega\sigma\nabla({\bf u}^1-{\bf u}^0)\cdot\nabla ({\bf v}-{\bf u}^1)dx+\int_\Omega\sigma\nabla
{\bf u}^0\cdot\nabla ({\bf v}-{\bf u}^1)dx+\\
+\int_\Gamma\alpha{[u^1]-S\over h}([v]-[u^1])ds
  +\int_\Gamma\{j([v])-j([u^1])\}ds
\geq\langle {\bf f}^1-{\bf f}(0),{\bf v}-{\bf u }^1\rangle_\Omega+\\
+\langle{\bf f}(0) ,{\bf v}-{\bf u}^1\rangle_\Omega
-\langle g^1-g(0), v_1 -u^1_1\rangle_\Gamma-
\langle g(0) , v_1 -u^1_1\rangle_\Gamma,\end{eqnarray*}
for all ${\bf v}\in{\bf  V}$, and in particular ${\bf v}={\bf u}^0$.
Thus, we apply the assumption (\ref{uoi}) with ${\bf v}={\bf u}^1$ and divide by $h$ we deduce
$$\int_\Omega{\sigma h}|\nabla {\bf Z}^1|^2dx+\int_\Gamma\alpha\left({[u^1]-S\over h}
\right)^2ds\leq
 \left(\|{\bf f}^1-{\bf f}(0)\|_{{\bf V}'}+\|g^1-g(0)\|_{Y'}\right)\|{\bf Z}^{1}\|_{\bf V}.$$
Then, using (\ref{defs})-(\ref{defa}),
(\ref{deff})-(\ref{defg}) and taking the Young
 inequality into account for the right hand side, we get
$$\alpha_\#
\|U^1\|^2_{2,\Gamma}\leq {
(d_1+C_Yd_2)^2h\over 2\sigma_\#}+{\sigma_\#\over 2} h\|{U}^{1}\|_{2,\Gamma}^2.$$
Considering $h<\alpha_\#\min\{1/\sigma_\#,1\}$ we insert the resulting estimate for $U^1$ into (\ref{uz})
concluding
\[{\sigma_\#\over 2}\sum^{i}_{k=1}h\|\nabla {\bf  Z}^{k+1}\|^2_{2,\Omega}
+\alpha_\#\|U^{i+1}\|^2_{2,\Gamma}\leq(\alpha^\#+T){
(d_1+C_Yd_2)^2\over \sigma_\#}+{\sigma_\#\over 2} \sum^{i}_{k=1}h\|{U}^{k+1}\|_{2,\Gamma}^2.\]
Hence, applying the Gronwall Lemma $U_m$ is uniformly estimated in $L^\infty(0;T;L^2(\Gamma))$
and  successively ${\bf Z}_m$ is uniformly estimated in $L^2(0;T;{\bf V})$.
Therefore the existence of a solution ${\bf u}\in C([0,T];{\bf V})$ in accordance to Theorem
\ref{prop1} can be done by similar arguments of passage to the limit
(cf. Section \ref{reg2}).

{\bf Acknowledgement.}
The author wishes to express her gratitude to J.F. Rodrigues for suggesting
the problem and some stimulating conversations.

\end{document}